\documentstyle[12pt]{article}

\setlength{\topmargin}{-.6in}
\setlength{\textheight}{8.75in}
\setlength{\textwidth}{6in}
\setlength{\oddsidemargin}{3.5ex}
\setlength{\evensidemargin}{0 pt}
\setlength{\headsep}{.5in}
\setlength{\footskip}{.5in}

\begin{document}

%
%
%
%
%
\catcode`\@=11\relax
\newwrite\@unused
\def\typeout#1{{\let\protect\string\immediate\write\@unused{#1}}}
\typeout{psfig: version 1.1}
\def\psglobal#1{
\typeout{psfig: including #1 globally}
\immediate\special{ps:plotfile #1 global}}
\def\psfiginit{\typeout{psfiginit}
\immediate\psglobal{/usr/lib/ps/figtex.pro}}
%
%
\def\@nnil{\@nil}
\def\@empty{}
\def\@psdonoop#1\@@#2#3{}
\def\@psdo#1:=#2\do#3{\edef\@psdotmp{#2}\ifx\@psdotmp\@empty \else
    \expandafter\@psdoloop#2,\@nil,\@nil\@@#1{#3}\fi}
\def\@psdoloop#1,#2,#3\@@#4#5{\def#4{#1}\ifx #4\@nnil \else
       #5\def#4{#2}\ifx #4\@nnil \else#5\@ipsdoloop #3\@@#4{#5}\fi\fi}
\def\@ipsdoloop#1,#2\@@#3#4{\def#3{#1}\ifx #3\@nnil 
       \let\@nextwhile=\@psdonoop \else
      #4\relax\let\@nextwhile=\@ipsdoloop\fi\@nextwhile#2\@@#3{#4}}
\def\@tpsdo#1:=#2\do#3{\xdef\@psdotmp{#2}\ifx\@psdotmp\@empty \else
    \@tpsdoloop#2\@nil\@nil\@@#1{#3}\fi}
\def\@tpsdoloop#1#2\@@#3#4{\def#3{#1}\ifx #3\@nnil 
       \let\@nextwhile=\@psdonoop \else
      #4\relax\let\@nextwhile=\@tpsdoloop\fi\@nextwhile#2\@@#3{#4}}
\def\psdraft{
	\def\@psdraft{0}
}
\def\psfull{
	\def\@psdraft{100}
}
\psfull
\newif\if@prologfile
\newif\if@postlogfile
\newif\if@bbllx
\newif\if@bblly
\newif\if@bburx
\newif\if@bbury
\newif\if@height
\newif\if@width
\newif\if@rheight
\newif\if@rwidth
\newif\if@clip
\def\@p@@sclip#1{\@cliptrue}
\def\@p@@sfile#1{
		   \def\@p@sfile{#1}
}
\def\@p@@sfigure#1{\def\@p@sfile{#1}}
\def\@p@@sbbllx#1{
		\@bbllxtrue
		\dimen100=#1
		\edef\@p@sbbllx{\number\dimen100}
}
\def\@p@@sbblly#1{
		\@bbllytrue
		\dimen100=#1
		\edef\@p@sbblly{\number\dimen100}
}
\def\@p@@sbburx#1{
		\@bburxtrue
		\dimen100=#1
		\edef\@p@sbburx{\number\dimen100}
}
\def\@p@@sbbury#1{
		\@bburytrue
		\dimen100=#1
		\edef\@p@sbbury{\number\dimen100}
}
\def\@p@@sheight#1{
		\@heighttrue
		\dimen100=#1
   		\edef\@p@sheight{\number\dimen100}
}
\def\@p@@swidth#1{
		\@widthtrue
		\dimen100=#1
		\edef\@p@swidth{\number\dimen100}
}
\def\@p@@srheight#1{
		\@rheighttrue
		\dimen100=#1
		\edef\@p@srheight{\number\dimen100}
}
\def\@p@@srwidth#1{
		\@rwidthtrue
		\dimen100=#1
		\edef\@p@srwidth{\number\dimen100}
}
\def\@p@@sprolog#1{\@prologfiletrue\def\@prologfileval{#1}}
\def\@p@@spostlog#1{\@postlogfiletrue\def\@postlogfileval{#1}}
\def\@cs@name#1{\csname #1\endcsname}
\def\@setparms#1=#2,{\@cs@name{@p@@s#1}{#2}}
%
%
\def\ps@init@parms{
		\@bbllxfalse \@bbllyfalse
		\@bburxfalse \@bburyfalse
		\@heightfalse \@widthfalse
		\@rheightfalse \@rwidthfalse
		\def\@p@sbbllx{}\def\@p@sbblly{}
		\def\@p@sbburx{}\def\@p@sbbury{}
		\def\@p@sheight{}\def\@p@swidth{}
		\def\@p@srheight{}\def\@p@srwidth{}
		\def\@p@sfile{}
		\def\@p@scost{10}
		\def\@sc{}
		\@prologfilefalse
		\@postlogfilefalse
		\@clipfalse
}
%
%
\def\parse@ps@parms#1{
	 	\@psdo\@psfiga:=#1\do
		   {\expandafter\@setparms\@psfiga,}}
%
%
\newif\ifno@bb
\newif\ifnot@eof
\newread\ps@stream
\def\bb@missing{
	\typeout{psfig: searching \@p@sfile \space  for bounding box}
	\openin\ps@stream=\@p@sfile
	\no@bbtrue
	\not@eoftrue
	\catcode`\%=12
	\loop
		\read\ps@stream to \line@in
		\global\toks200=\expandafter{\line@in}
		\ifeof\ps@stream \not@eoffalse \fi
		\@bbtest{\toks200}
		\if@bbmatch\not@eoffalse\expandafter\bb@cull\the\toks200\fi
	\ifnot@eof \repeat
	\catcode`\%=14
}	
\catcode`\%=12
\newif\if@bbmatch
\def\@bbtest#1{\expandafter\@a@\the#1
\long\def\@a@#1
\long\def\bb@cull#1 #2 #3 #4 #5 {
	\dimen100=#2 bp\edef\@p@sbbllx{\number\dimen100}
	\dimen100=#3 bp\edef\@p@sbblly{\number\dimen100}
	\dimen100=#4 bp\edef\@p@sbburx{\number\dimen100}
	\dimen100=#5 bp\edef\@p@sbbury{\number\dimen100}
	\no@bbfalse
}
\catcode`\%=14
\def\compute@bb{
		\no@bbfalse
		\if@bbllx \else \no@bbtrue \fi
		\if@bblly \else \no@bbtrue \fi
		\if@bburx \else \no@bbtrue \fi
		\if@bbury \else \no@bbtrue \fi
		\ifno@bb \bb@missing \fi
		\ifno@bb \typeout{FATAL ERROR: no bb supplied or found}
			\no-bb-error
		\fi
		\count203=\@p@sbburx
		\count204=\@p@sbbury
		\advance\count203 by -\@p@sbbllx
		\advance\count204 by -\@p@sbblly
		\edef\@bbw{\number\count203}
		\edef\@bbh{\number\count204}
}
%
%
\def\in@hundreds#1#2#3{\count240=#2 \count241=#3
		     \count100=\count240	
		     \divide\count100 by \count241
		     \count101=\count100
		     \multiply\count101 by \count241
		     \advance\count240 by -\count101
		     \multiply\count240 by 10
		     \count101=\count240	
		     \divide\count101 by \count241
		     \count102=\count101
		     \multiply\count102 by \count241
		     \advance\count240 by -\count102
		     \multiply\count240 by 10
		     \count102=\count240	
		     \divide\count102 by \count241
		     \count200=#1\count205=0
		     \count201=\count200
			\multiply\count201 by \count100
		 	\advance\count205 by \count201
		     \count201=\count200
			\divide\count201 by 10
			\multiply\count201 by \count101
			\advance\count205 by \count201
		     \count201=\count200
			\divide\count201 by 100
			\multiply\count201 by \count102
			\advance\count205 by \count201
		     \edef\@result{\number\count205}
}
\def\compute@wfromh{
		\in@hundreds{\@p@sheight}{\@bbw}{\@bbh}
		\edef\@p@swidth{\@result}
}
\def\compute@hfromw{
		\in@hundreds{\@p@swidth}{\@bbh}{\@bbw}
		\edef\@p@sheight{\@result}
}
\def\compute@handw{
		\if@height 
			\if@width
			\else
				\compute@wfromh
			\fi
		\else 
			\if@width
				\compute@hfromw
			\else
				\edef\@p@sheight{\@bbh}
				\edef\@p@swidth{\@bbw}
			\fi
		\fi
}
\def\compute@resv{
		\if@rheight \else \edef\@p@srheight{\@p@sheight} \fi
		\if@rwidth \else \edef\@p@srwidth{\@p@swidth} \fi
}
%
\def\compute@sizes{
	\compute@bb
	\compute@handw
	\compute@resv
}
%
%
\def\psfig#1{\vbox {
	%
	\ps@init@parms
	\parse@ps@parms{#1}
	\compute@sizes
	\ifnum\@p@scost<\@psdraft{
		\typeout{psfig: including \@p@sfile \space }
		\special{ps::[begin] 	\@p@swidth \space \@p@sheight \space
				\@p@sbbllx \space \@p@sbblly \space
				\@p@sbburx \space \@p@sbbury \space
				startTexFig \space }
		\if@clip{
			\typeout{(clip)}
			\special{ps:: \@p@sbbllx \space \@p@sbblly \space
				\@p@sbburx \space \@p@sbbury \space
				doclip \space }
		}\fi
		\if@prologfile
		    \special{ps: plotfile \@prologfileval \space } \fi
		\special{ps: plotfile \@p@sfile \space }
		\if@postlogfile
		    \special{ps: plotfile \@postlogfileval \space } \fi
		\special{ps::[end] endTexFig \space }
		\vbox to \@p@srheight true sp{
			\hbox to \@p@srwidth true sp{
				\hfil
			}
		\vfil
		}
	}\else{
		\vbox to \@p@srheight true sp{
		\vss
			\hbox to \@p@srwidth true sp{
				\hss
				\@p@sfile
				\hss
			}
		\vss
		}
	}\fi
}}
\catcode`\@=12\relax

\font\bbbld=msbm10 scaled\magstep1
\newcommand{\bfR}{\hbox{\bbbld R}}
\newcommand{\bfC}{\hbox{\bbbld C}}
\newcommand{\bfZ}{\hbox{\bbbld Z}}
\newcommand{\bfH}{\hbox{\bbbld H}}
\newcommand{\bfQ}{\hbox{\bbbld Q}}
\newcommand{\bfN}{\hbox{\bbbld N}}
\newcommand{\bfP}{\hbox{\bbbld P}}
\newcommand{\bfT}{\hbox{\bbbld T}}
\def\Sym{\mathop{\rm Sym}}
\newcommand{\suchthat}{\mid}
\newcommand{\halo}[1]{\Int(#1)}
\def\Int{\mathop{\rm Int}}
\def\Re{\mathop{\rm Re}}
\def\Im{\mathop{\rm Im}}
\newcommand{\union}{\cup}
\newcommand{\goesto}{\rightarrow}
\newcommand{\bdy}{\partial}
\newcommand{\n}{\noindent}
\newcommand{\p}{\hspace*{\parindent}}

\newtheorem{theorem}{Theorem}[section]
\newtheorem{assertion}{Assertion}[section]
\newtheorem{proposition}{Proposition}[section]
\newtheorem{lemma}{Lemma}[section]
\newtheorem{definition}{Definition}[section]
\newtheorem{claim}{Claim}[section]
\newtheorem{corollary}{Corollary}[section]
\newtheorem{observation}{Observation}[section]
\newtheorem{conjecture}{Conjecture}[section]
\newtheorem{question}{Question}[section]

\newbox\qedbox
\setbox\qedbox=\hbox{$\Box$}
\newenvironment{proof}{\smallskip\noindent{\bf Proof.}\hskip \labelsep}%
			{\hfill\penalty10000\copy\qedbox\par\medskip}
\newenvironment{proof1}{\smallskip\noindent{\bf Proof of Theorem 2.1.}
			\hskip \labelsep}%
			{\hfill\penalty10000\copy\qedbox\par\medskip}
\newenvironment{proof2}{\smallskip\noindent{\bf Proof of Corollary 2.1.}
			\hskip \labelsep}%
			{\hfill\penalty10000\copy\qedbox\par\medskip}
\newenvironment{remark}{\smallskip\noindent{\bf Remark.}\hskip \labelsep}%
			{\hfill\penalty10000\copy\qedbox\par\medskip}
\newenvironment{example}{\smallskip\noindent{\bf Example.}\hskip \labelsep}%
			{\hfill\penalty10000\copy\qedbox\par\medskip}
\newenvironment{proofspec}[1]%
		      {\smallskip\noindent{\bf Proof of #1.}\hskip \labelsep}%
			{\nobreak\hfill\hfill\nobreak\copy\qedbox\par\medskip}
\newenvironment{acknowledgements}{\smallskip\noindent{\bf Acknowledgements.}%
	\hskip\labelsep}{}

\setlength{\baselineskip}{1.2\baselineskip}

\title{Minimal Surfaces with Planar Boundary Curves}

\author{Wayne Rossman}

\maketitle

\section{Introduction}

In 1956, Shiffman \cite{Sh} proved that any compact minimal annulus with two 
convex boundary curves (resp. circles) in parallel planes is foliated by 
convex planar curves (resp. circles) in the intermediate planes.  
In 1978, Meeks conjectured that the assumption the minimal surface 
is an annulus is unnecessary \cite{M}; that is, 
he conjectured that any compact connected 
minimal surface with two planar convex 
boundary curves in parallel planes must be an annulus.  

Partial results have been proven in the direction of this conjecture.  Schoen 
\cite{Sc1} proved the Meeks conjecture in the 
case where the two boundary curves share a 
pair of reflectional symmetries in planes perpendicular to the planes 
containing the boundary curves.  
Another interesting result related to the Meeks conjecture has been 
proven by Meeks and White (Theorem~1.2, \cite{MW2}).  
Recall that a minimal surface $M$ is called {\em stable} if, with respect 
to any normal variation that vanishes on $\partial M$, the second derivative 
of the area functional is positive.  The minimal surface is 
{\em unstable} if there exists such a variation with negative second 
derivative for the area functional, and it is {\em almost-stable} 
if the second derivative is nonnegative for all such 
variations and is zero for some nontrivial variation.  Recall also that a subset 
of $\bfR^3$ is called {\em extremal} if it is contained in the boundary 
of its convex hull.  The result of Meeks and White is that if 
$\Gamma$ is an extremal pair of smooth disjoint convex curves in 
distinct planes, then exactly one of the following holds: 
\newcounter{num}
\begin{list}%
{\arabic{num})}{\usecounter{num}\setlength{\rightmargin}{\leftmargin}}
\item $\Gamma$ is not the boundary of any connected compact minimal 
surface, with or without branch points.
\item $\Gamma$ is the boundary of exactly one minimal annulus and this 
annulus is almost-stable.  In this case, $\Gamma$ bounds no other connected 
branched minimal surface.
\item $\Gamma$ is the boundary of exactly two minimal annuli, one 
stable and one unstable.  
\end{list}

Other partial results toward the Meeks conjecture have been 
proven by Meeks and 
White for stable surfaces \cite{MW1}, \cite{MW2}.  
They have proven the conjecture 
for stable and almost-stable minimal surfaces that have two convex boundary 
curves lying in parallel planes such that 
\begin{list}%
{\arabic{num})}{\usecounter{num}\setlength{\rightmargin}{\leftmargin}}
\item the two boundary curves have a common plane of reflective 
symmetry perpendicular to the planes containing them, or 
\item the two boundary curves are reflected into each other by a plane 
parallel to the planes containing them.
\end{list}
The first of these two conditions has been extended by Meeks and White 
to boundary curves lying in nonparallel planes, but still forming an extremal 
set.  The second of these two conditions is generalized to nonparallel 
planes by Theorem~2.1 in the next section.  

In section~3 we consider a more general setting: 
compact connected minimal surfaces, with a pair of boundary curves 
(not necessarily convex) in 
distinct planes, that have least-area amongst all orientable surfaces with the same 
boundary.  When the planes containing these two boundary curves 
are either parallel or ``sufficiently close'' to parallel, and when 
the boundary curves themselves are ``sufficiently close'' 
to each other, one 
can draw specific conclusions about the geometry and topology of the surfaces, 
as in Theorem~3.1 and Theorem~3.2.  
(These theorems are formal statements about surfaces that can be physically 
realized by experimentation with wire frames and soap films.)  

In the final section~4 we state two well known results on the existence of 
compact minimal surfaces with planar boundary curves in parallel 
planes.  These results are easily proven using the 
maximum principle for minimal surfaces, 
but do not appear elsewhere in the literature.  
A corollary of these results is a generalization of a result of Nitsche 
\cite{N1}: Let $M$ be 
any compact minimal annulus with two planar boundary curves of 
diameters $d_1$ and $d_2$ in parallel planes $P_1$ and $P_2$; if the 
distance between $P_1$ and $P_2$ is 
$h$, then the inequality $h \leq 
\frac{3}{2}\max\{d_1,d_2\}$ is satisfied.  The corollary we prove here 
does not assume the 
minimal surface $M$ is an annulus and has the strengthened conclusion 
$h \leq \max\{d_1,d_2\}$.  
We also include a similar result for nonminimal constant mean curvature 
surfaces.

The author thanks Miyuki Koiso, Robert Kusner, and Masaaki Umehara for helpful 
conversations.  

\section{Topological Uniqueness in the Stable Case} 

Meeks and White \cite{MW1} proved the following, 
which we generalize in Theorem~2.1:  
{\it Let $\alpha$ be a smooth convex plane curve in $\bfR^3$, let $P_0$ 
be a plane parallel to the plane containing $\alpha$, and let 
${\em Ref}:\bfR^3 
\longrightarrow \bfR^3$ be reflection in the plane $ P_0 $.  If
$ \Sigma $ is a connected stable or almost-stable compact minimal surface
 with boundary $ \alpha \cup {\em Ref}(\alpha) $, then $ \Sigma $ is an 
embedded annulus.}

	Theorem 2.1 is similar to the above 
result, but we assume $\alpha$ does not lie in a plane 
parallel to $P_0$.  We assume only that $\alpha$ lies on one side of 
$P_0$, thus $\alpha$ and $\mbox{Ref}(\alpha)$ do not lie in 
parallel planes.  In order to 
state this precisely, we introduce some notation.

	Consider all closed half-planes in $ \bfR^3 $ with boundary the 
$x_1$-axis.  Let $\omega (0) $ be the half-plane containing the 
positive $x_2$-axis, and let $\omega (\theta) $ be the half-plane making an 
angle of $ \theta $ with $\omega (0) $.  (Thus the half-plane containing the 
positive $x_3$-axis is $\omega (\pi/2) $.)  Let $ R_{\theta} $ be a 
rotation of angle $\theta$ about the $x_1$-axis.  

\begin{definition}
	The wedge between $\omega(\gamma)$ and $\omega(\beta)$ 
of angle $\beta - \gamma$ in  $\bfR^3$ is $ W_{\omega,\beta} = \cup_
{(\gamma\leq\alpha\leq\beta)} \omega(\alpha) $.
\end{definition}

\begin{theorem}
	If $M\subseteq W_{\gamma,\beta} $ is a stable or 
almost-stable compact connected
minimal surface with boundary $ \partial M = C\cup R_{\beta-\gamma}(C) $, 
where $ C $ is a strictly convex curve contained in $ \omega(\gamma) $,
 then $ M $ is an embedded annulus.
\end{theorem}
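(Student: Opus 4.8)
The plan is to reduce the statement to the reflectively symmetric configuration already handled by Meeks and White and quoted just before this theorem. First I would record the symmetry of the boundary. Writing points of $\bfR^3$ in cylindrical coordinates $(x_1,r,\phi)$ about the $x_1$-axis (so that $\omega(\phi)$ consists of the points with angular coordinate $\phi$), the rotation $R_{\beta-\gamma}$ sends $(x_1,r,\gamma)$ to $(x_1,r,\beta)$, while reflection $\rho$ across the plane $\Pi$ containing $\omega((\gamma+\beta)/2)$ and the $x_1$-axis sends the angular coordinate $\phi$ to $(\gamma+\beta)-\phi$, hence also sends $(x_1,r,\gamma)$ to $(x_1,r,\beta)$. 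Thus $\rho$ and $R_{\beta-\gamma}$ agree on $\omega(\gamma)$, so $\rho(C)=R_{\beta-\gamma}(C)$ and $\partial M=C\cup\rho(C)$ is invariant under $\rho$. Since $\rho$ maps $\omega(\phi)$ to $\omega((\gamma+\beta)-\phi)$, it preserves the wedge $W_{\gamma,\beta}$. Consequently $\rho(M)$ is again a stable or almost-stable compact connected minimal surface contained in $W_{\gamma,\beta}$ with the same boundary $\partial M$, and the bisecting plane $\Pi$ plays exactly the role of the midplane $P_0$ in the Meeks--White result \cite{MW1}.

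The heart of the argument is to show that $M$ is itself $\rho$-invariant, i.e.\ $\rho(M)=M$. Here I would use stability decisively: if $M$ and $\rho(M)$ were distinct, they would be two stable (resp.\ almost-stable) minimal surfaces spanning the same symmetric extremal boundary inside the mean-convex wedge, and I would derive a contradiction through the strong maximum principle. The mechanism is an Alexandrov-type reflection in the rotating family of planes $\Pi_\theta$ through the $x_1$-axis, with $\rho_\theta$ denoting reflection across $\Pi_\theta$ and $\rho=\rho_{(\gamma+\beta)/2}$: starting from $\theta=\gamma$, where $\rho_\gamma(M)$ lies on the side of $\Pi_\gamma$ opposite to $M$, one increases $\theta$ and tracks the first interior or boundary contact of the reflected piece with $M$. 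Strict convexity of $C$ guarantees that the reflected boundary curve $\rho_\theta(C)$ stays disjoint from $\partial M$ until $\theta$ reaches $(\gamma+\beta)/2$, where $\rho(C)=R_{\beta-\gamma}(C)$ forces boundary contact, and the maximum principle then yields symmetry across $\Pi$. The main obstacle is precisely this step: the reflection argument nominally presupposes embeddedness, so the circularity must be broken using stability, and the possibility of branch points (allowed a priori) must be excluded along the way, exactly as in the Meeks--White treatment of the parallel case.

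Granting $\rho(M)=M$, I would finish by converting the symmetry into embeddedness and annulus topology. The fixed-point set of $\rho$ is the plane $\Pi$, and symmetry forces $M$ to meet $\Pi$ orthogonally along its intersection curve $M\cap\Pi$; cutting there produces two halves interchanged by $\rho$. Using the strict convexity of $C$ together with the containment $M\subseteq W_{\gamma,\beta}$, I would run the moving half-plane argument on a single half to show that each half is a graph over a planar convex domain, hence free of branch points and embedded, and that $M\cap\Pi$ is a single convex Jordan curve. The two graphical halves then glue along this curve into an embedded annulus, giving the desired conclusion; alternatively, once the symmetry $\rho(M)=M$ is in hand, one may invoke the Meeks--White reflective result \cite{MW1} directly, now applicable because $C\cup\rho(C)$ is symmetric in $\Pi$.
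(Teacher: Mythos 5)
There is a genuine gap, and it sits exactly where the hard part of the theorem lives. Your first two paragraphs (the boundary symmetry $\rho(C)=R_{\beta-\gamma}(C)$ and the symmetry/graph structure of $M$ itself) are fine in spirit; in fact the paper obtains all of this in one stroke by citing Theorem~2 of Schoen \cite{Sc1}, applied after projecting along the $x_3$-direction: $M$ is embedded and equal to $\{(x_1,x_2,\pm u(x_1,x_2)) : (x_1,x_2)\in\Omega\}$ for a compact region $\Omega$ in the bisecting half-plane $\omega(0)$. Note that this step needs no stability at all. The gap is your third paragraph: symmetry plus the fact that each half is a graph over $\Omega$ does \emph{not} imply that $M\cap\Pi$ is a single Jordan curve. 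The region $\Omega$ may a priori be multiply connected, $\partial\Omega=\Lambda(C)\cup C_1\cup\cdots\cup C_k$ with $k>1$, in which case $M$ is a perfectly good embedded, $\rho$-symmetric surface of genus $k-1$ bounded by $C\cup\rho(C)$. No moving-plane or Alexandrov argument can exclude this: such arguments see only the graph structure, which is compatible with any $k$. Indeed, if your step worked one would obtain the annulus conclusion without any stability hypothesis, i.e., the full Meeks conjecture in this symmetric setting, which is precisely what is not known; stability is essential. This is where the paper does all its work: assuming $k>1$, it finds an interior point of $M_0^+$ where the Gaussian curvature vanishes, takes the Killing field $\phi_p$ of homothety centered at a point $p$ on the $x_1$-axis lying in the tangent plane there (the homothety field, rather than Meeks--White's translation field, is what is adapted to the wedge), and studies the horizon $H(M;\phi_p)$, the nodal set of the associated Jacobi field. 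The horizon meets each of the strictly convex curves $C$, $R_{-2\beta}(C)$, $C_i$ exactly twice, and at the flat point at least four nodal arcs cross; collapsing each $C_i$ to a point, a counting argument produces a proper subdomain $\tilde{\cal U}\subset M$ with $\partial\tilde{\cal U}\subseteq H(M;\phi_p)$, and the Smale index theorem then forces $M$ to be unstable, a contradiction. Without some version of this Jacobi-field argument your proof cannot close.

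Your fallback (``invoke the Meeks--White reflective result \cite{MW1} directly, now applicable because $C\cup\rho(C)$ is symmetric in $\Pi$'') is also invalid. The Meeks--White theorem requires the curve $\alpha$ to lie in a plane \emph{parallel} to the reflection plane $P_0$; here $C$ lies in $\omega(\gamma)$, which meets the bisecting plane $\Pi$ along the $x_1$-axis and is not parallel to it. Removing that parallelism hypothesis is exactly the content of Theorem~2.1, so quoting \cite{MW1} at this point is circular.
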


	We now describe the natural free boundary problem for a wedge.
Suppose $ C $ is a strictly 
convex Jordan curve in $ \omega(\beta) $ and $ M $ is a 
compact branched minimal surface such that the boundary 
$ \partial M $ consists of 
$ C $ and a nonempty collection of immersed curves in the $x_1x_2$-plane.  
We may assume that $ 0 \leq \beta \leq \pi/2 $, for if not, by 
reflections through the $x_1x_3$-plane and $x_1x_2$-plane, we have a  
congruent problem where this is so.  
If $ M $ is orthogonal to the $x_1x_2$-plane along 
$\partial M \cap \omega(0)$, then $ M $ is called a solution of the free 
boundary value problem for $ C $ and $ W_{0,\beta} $.  
By the maximum principle (see, for example, \cite{Sc1}), 
if $M$ is such a solution, then the 
portion of $\partial M$ lying in the 
$x_1x_2$-plane actually lies in $ \omega(0) $.  
If, with respect to any normal variation 
of $ M $ that vanishes on $ C $, the second derivative of the area 
functional is positive, then $ M $ is a {\em stable} solution to the 
free boundary value problem.  Similarly, we can define when $ M $ is 
{\em unstable} or {\em almost-stable} \cite{MW1}, \cite{MW2}.  

\begin{corollary}
Suppose $ M $ is a stable or almost-stable solution to the free
boundary value problem for $ C $ and $ W_{0,\beta} $, then $ M $ is 
an embedded annulus.
\end{corollary}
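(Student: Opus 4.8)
The plan is to reduce this free boundary problem to the fixed boundary problem already settled in Theorem~2.1, by doubling $M$ across the $x_1x_2$-plane. Since $M$ meets this plane orthogonally along its free boundary $F = \partial M \cap \omega(0)$, and a plane is totally geodesic, the Schwarz reflection principle guarantees that $\tilde M := M \cup \mbox{Ref}(M)$ is a smooth compact minimal surface, analytic across $F$, where $\mbox{Ref}$ denotes reflection in the $x_1x_2$-plane. I will assume $0 < \beta \le \pi/2$, the case $\beta = 0$ being degenerate. First I identify the boundary data of $\tilde M$. Reflection in the $x_1x_2$-plane carries $\omega(\theta)$ to $\omega(-\theta)$, and on $\omega(\beta)$ it agrees with the rotation $R_{-2\beta}$; hence $\mbox{Ref}(C) = R_{-2\beta}(C)$ is a strictly convex curve lying in $\omega(-\beta)$ and disjoint from $C$. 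Since $M \subseteq W_{0,\beta}$ we have $\mbox{Ref}(M) \subseteq W_{-\beta,0}$, so $\tilde M$ lies in the wedge of angle $2\beta$ between $\omega(-\beta)$ and $\omega(\beta)$, with $\partial \tilde M = \mbox{Ref}(C) \cup R_{2\beta}(\mbox{Ref}(C))$ and $\mbox{Ref}(C)$ strictly convex in $\omega(-\beta)$. This is exactly the configuration of Theorem~2.1, provided I can verify that $\tilde M$ is stable or almost-stable as a minimal surface.

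The heart of the argument is this stability transfer. Because the $x_1x_2$-plane is flat and $M$ meets it orthogonally, the boundary term in the second variation of area for the free boundary problem vanishes, so for any admissible $u$ (smooth, vanishing on $C$) the index form is $Q_M(u,u) = \int_M (|\nabla u|^2 - |A|^2 u^2)\,dA$, and free boundary (almost-)stability means $Q_M \ge 0$ on all such $u$. The index form of a variation $w$ of $\tilde M$ vanishing on $\partial \tilde M$ has the same shape. I decompose $w = w^+ + w^-$ into its symmetric and antisymmetric parts under the reflection involution $\sigma$ of $\tilde M$; the cross term $Q_{\tilde M}(w^+, w^-)$ integrates to zero because its integrand is $\sigma$-antisymmetric over the $\sigma$-symmetric domain. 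The symmetric part restricts to an admissible free boundary variation of $M$ (it vanishes on $C$ and, being even, satisfies $\partial_\nu w^+ = 0$ on $F$), giving $Q_{\tilde M}(w^+, w^+) = 2\,Q_M(w^+|_M, w^+|_M) \ge 0$. The antisymmetric part vanishes on the fixed set $F$, so $w^-|_M$ vanishes on all of $\partial M$; such functions are in particular admissible free boundary test functions, whence $Q_{\tilde M}(w^-, w^-) = 2\,Q_M(w^-|_M, w^-|_M) \ge 0$. Therefore $Q_{\tilde M}(w,w) \ge 0$ for every admissible $w$, i.e. $\tilde M$ is stable or almost-stable.

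With stability established, Theorem~2.1 shows $\tilde M$ is an embedded annulus, and it remains to recover $M$. The involution $\sigma$ is a Euclidean reflection fixing $F$ pointwise and interchanging the two boundary circles $C$ and $\mbox{Ref}(C)$ of the annulus $\tilde M$; thus $F = \mbox{Fix}(\sigma) \cap \tilde M$ is disjoint from $\partial \tilde M$ and separates $\tilde M$ into the two halves $M$ and $\mbox{Ref}(M)$. A separating fixed curve of such an involution of an annulus must be a single embedded circle isotopic to the core: if $F$ had a second component, the subannulus lying between two components of $F$ would be preserved by $\sigma$ with both of its boundary circles fixed pointwise, forcing $\sigma$ to be the identity there, a contradiction. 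Hence $F$ is one circle, and $M$, being one of the two component annuli of $\tilde M \setminus F$ closed up along $F$, is an embedded annulus with boundary $C \cup F$.

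The main obstacle is the stability transfer in the second step. One must be certain that the free boundary second variation really reduces to the bare index form with no boundary contribution (this is where flatness of the plane together with orthogonality is used) and that the symmetric/antisymmetric decomposition maps the admissible variations of $\tilde M$ onto the relevant subspaces of variations of $M$ compatibly with the index forms, including the delicate equality case distinguishing almost-stability from instability. The regularity of $w^\pm$ across $F$ and the precise choice of function space require care, but the flatness of the reflecting plane is exactly what makes the boundary terms vanish and keeps the decomposition clean.
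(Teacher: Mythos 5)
Your proposal is correct and follows the same overall route as the paper: double $M$ across $\omega(0)$ via the Schwarz reflection principle, apply Theorem~2.1 to $\tilde{M} = M \cup \mbox{Ref}(M)$, and then recover $M$ from the doubled annulus. The differences lie in what each treatment makes explicit. The paper's proof is terse: it simply asserts that $\tilde{M}$ ``satisfies the conditions of Theorem~2.1,'' leaving the transfer of stability from the free boundary problem to the doubled surface unverified; your symmetric/antisymmetric decomposition $w = w^+ + w^-$ --- with the cross term vanishing by symmetry, $w^+|_M$ an admissible free boundary variation since it vanishes on $C$, and $w^-|_M$ vanishing on all of $\partial M$ --- is exactly the argument needed there, and your remark that flatness of the reflecting plane kills the boundary term in the free boundary index form is indeed the crucial point. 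Where you genuinely diverge is the last step: the paper recovers $M$ by reusing internal facts from its proof of Theorem~2.1 (namely $k=1$, so $\partial M$ meets $\omega(0)$ in a single curve, and $M$ is a graph over $\omega(0)$), whereas you argue intrinsically with the involution $\sigma$. Your topological argument is sound in spirit but has one loose end: you rule out a second component of $F = \mbox{Fix}(\sigma)$ only when the components are essential (otherwise ``the subannulus between them'' is undefined). The cleaner statement is that $\tilde{M} \setminus F$ has exactly two components ($M \setminus F$ and $\mbox{Ref}(M) \setminus F$, each connected), while $k$ disjoint embedded circles in an annulus separate it into exactly $k+1$ components, forcing $k=1$; that single circle must then be essential, since an inessential one would split $\tilde{M}$ into a disk and a non-disk, which cannot be exchanged by the homeomorphism $\sigma$. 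The paper's route buys brevity by leaning on its earlier proof; yours buys self-containedness and supplies the stability argument that the paper leaves implicit.
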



	The proof of Theorem~2.1 requires the use of the Jacobi operator and 
Jacobi fields.  Following the notation of Choe \cite{C}, three types 
of Killing vector fields in $ \bfR^3 $ are $ \phi_n $, $ \phi_l $, and 
$ \phi_p $, where these three vector fields are the variation vector fields 
produced by translating in the direction of the 
unit vector $ n $, rotating around a straight 
line $ l $, and homothetically expanding from a point 
$ p $, respectively.  Meeks and White \cite{MW1} were interested in 
$ \phi_n $, and here we are interested in $ \phi_p $.  

	Let $ S $ be a smooth immersed surface in $ \bfR^3 $.  
The {\em horizon} 
of $ S $ with respect to $ \phi_p $, denoted by $ H(S;\phi_p) $,
 is the set of all points of $ S $ at which $ \phi_p $ is a tangent vector 
of $ S $.  A connected subset $ D $ of $ S $ is called a {\em visible} set
with respect to $ \phi_p $ if $ D $ is disjoint from $ H(S;\phi_p) $.  
If $ M $ is a minimal surface in $ \bfR^3 $, then 
$ \phi_n(M^\perp) $, $ \phi_l(M^\perp) $, and $ \phi_p(M^\perp) $ (the 
projection of $ \phi_n $, $ \phi_l $, and $ \phi_p $ onto the normal 
bundle of $ M $, respectively) are Jacobi fields \cite{C}.  If $ M $ has 
a strictly proper open connected subset $ D $ such that 
$ \partial D \subseteq H(M;\phi_p) $, then it follows from the 
Smale index theorem (see, for example, \cite{C}, p199) that $ D $ is
almost-stable or unstable, and hence $ M $ is unstable.  We use this 
fact in the following proof.  

\begin{proof1}
	By a rotation of $\bfR^3$ if necessary, we may assume $ M 
\subseteq W_{-\beta,\beta}$, with 
$\partial M = C \cup R_{-2\beta}(C), \; C \subseteq \omega(\beta), \; 
R_{-2\beta}(C) 
\subseteq \omega(-\beta)$, and $ 0 < \beta \leq \pi/2$.  In fact $\beta 
< \pi/2$, since otherwise (by the maximum principle) 
$M$ would be a pair of disks, which is not 
connected.  The orthogonal projection $\Lambda$ from $\bfR^3$ 
to the $x_1x_2$-plane maps $C$ and $R_{2\beta}(C)$ to a strictly convex 
smooth Jordan curve $\Lambda(C)$ in $\omega(0)$.  
By Theorem 2 of \cite{Sc1}, $M$ is embedded and 
$ M = \{(x_1,x_2,\pm u(x_1,x_2)) \mid (x_1,x_2) 
 \in
\Omega\}$, where $\Omega$ is a compact
 region in $ \omega(0) $, and 
$u(x_1,x_2)$ is a nonnegative function defined on $\Omega$.  
Furthermore, since 
$ M_0^+ = M \cap \{x_3 \geq 0\}$ and $ M_0^- = M \cap \{x_3 \leq 0\}$ 
have locally bounded slope in their interiors \cite{Sc1}, and since the 
maximum principle immediately implies that the tangent planes of $M$ along 
$C$ and $ R_{-2\beta}(C) $ can never by vertical, 
the normal vectors of 
$ M $ are horizontal only on $ M \cap \omega(0) $.  

	Assume $ M $ is not an annulus.  Then $ \partial \Omega = \Lambda(C) 
\cup C_1 \cup \cdot\cdot\cdot \cup C_k $, where $ k > 1 $, and $ C_i $ is a 
curve in the compact region of $\omega(0)$ bounded by $ \Lambda(C)$, 
for all $i$.  Note that the curves $C_i$ are also planar geodesics 
in $M$.  We now check that each 
$ C_i $ must be strictly convex.  If not, then there exists an $i$ and a
$ q \in C_i $ such that the Gaussian 
curvature of $M$ at $q$ is zero.  Thus in a small neighborhood of $q$, 
$ M \cap T_q(M) $ is a set of at least three curves 
crossing at $q$.  This implies $ M_0^+ $ cannot be a graph, 
contradicting Theorem 2 of \cite{Sc1}.

	There must be at least one point in the interior of 
$ M_0^+ $ at which the Gaussian curvature $K$ vanishes.  
This follows from an argument identical to the argument given in the third 
paragraph of the proof of Theorem 3.1 in \cite{MW1}, so we do not 
repeat it here.  Thus there exists a point $ q \in Int(\Omega) $ such 
that the curvature 
$K$ at $ (q,u(q)) $ vanishes.  Let $ p $ be a point on the $x_1$-axis 
which is also contained in $ T_{(q,u(q))}(M)$.  
By translating if necessary we may assume 
that $p$ is the origin $\vec{0}$.  Thus $(q,u(q)) \in H(M;\phi_{\vec{0}})$.  
(If $ T_{(q,u(q))}(M)$ does not intersect the $x_1$-axis, then we can replace 
$\phi_{\vec{0}}$ with $\phi_n, n = \vec{e_1}$, and the remaining arguments 
of this proof follow through.)  

Since $G$ is a conformal 
map with branch points, $ H(M;\phi_{\vec{0}}) $ consists of smooth curve segments, 
whose endpoints meet in even numbers at isolated points in the interior 
of $M$.  
In particular, at least four such curves meet at the point $ (q,u(q)) $, 
because it is a branch point of $G$.  Note also that 
$ H(M;\phi_{\vec{0}})$ intersects each of $C$, $ R_{-2\beta}(C) $, 
and each 
$C_i$ at exactly two points, as these are strictly convex planar curves.  
Let $Z = \Lambda(H(M;\phi_{\vec{0}})) $.  
Since $Z$ is 
homeomorphic to $ H(M_0^+ ;\phi_{\vec{0}}) $, it has the same structure.  Hence 
$Z$ meets each $ C_i $ and also $\Lambda(C)$ exactly twice.  

	Now form a topological space $\hat{\Omega}$ from $\Omega$ by 
identifying each $C_i$ to a point, and note that $\hat{\Omega}$ is 
topologically a disk.  The corresponding set $\hat{Z}$ (of $Z$) in 
$\hat{\Omega}$ is a graph in which each vertex (except for the two 
vertices on $\Lambda(C) = \partial \hat{\Omega}$) 
has an even number of edges.  
Furthermore, the vertex q has at least four edges.  It follows that 
$\hat{\Omega} - \hat{Z}$ contains at least one connected component 
$\hat{\cal U}$ whose closure does not intersect 
$\Lambda(C) = \partial \hat{\Omega}$.  
Let $\cal U$ be the corresponding region in $\Omega$, and let $\tilde{\cal U} 
= \{(x_1,x_2,x_3) \in M \, | \, (x_1,x_2) \in {\cal U}\}$.  Then 
$\partial \tilde{\cal U} \subseteq H(M,\phi_{\vec{0}})$, 
and therefore $\tilde{\cal U}$ 
is an almost-stable or unstable proper subset of 
$M$, so $M$ must be unstable (by the Smale index theorem).  This contradiction 
implies that $k=1$ and $M$ is an annulus.  
\end{proof1}

\begin{proof2}
	Let $M$ satisfy the conditions of Corollary~2.1, and 
let $\mbox{Ref}(M)$ be the reflection of $M$ across $\omega(0)$.  
It follows from the Schwarz reflection principle (see, for example, \cite{O1}, 
Lemma 7.3)  
that $M \cup \mbox{Ref}(M)$ is a smooth minimal surface which 
satisfies the conditions of Theorem~2.1 and therefore is an 
embedded annulus.  By the arguments above, $\partial M$ 
intersects $\omega(0)$ along a 
single curve (since $k=1$), and $M$ is a graph over $\omega(0)$, 
so $M$ itself must be an embedded annulus.  
\end{proof2}

\begin{remark}
	It is not possible to generalize the Meeks and White result or 
Theorem 2.1 
to the case where a boundary curve has a 1-to-1 perpendicular 
projection to a strictly convex planar curve, but is not itself 
a planar curve.  A counterexample can be constructed as 
follows: Let the $x_1x_2$-plane be the plane in which the strictly convex 
projection lies.  Consider the two line segments 
connecting the points (0,0,1) and ($N$,0,1), (0,$\delta$,1) 
and ($N$,$\delta$,1), 
respectively, for some large $N$.  
Connect them at the ends with two horizontal semicircles of radius 
$\frac{\delta}{2}$.  
(The semicircles are $\{ x_1 \leq 0 \} \cap \{ x_3 = 1 \} 
\cap \{ x_1^2 + (x_2 - \frac{\delta}{2})^2 = (\frac{\delta}{2})^2 \}$ and 
$\{ x_1 \geq N \} \cap \{ x_3 = 1 \} 
\cap \{ (x_1 - N)^2 + (x_2 - \frac{\delta}{2})^2 = 
(\frac{\delta}{2})^2 \}$.)  
A small perturbation gives us a smooth strictly 
convex curve in the plane $\{x_3 = 1\}$, which is symmetric with respect to 
reflection in the plane $\{x_2 = \frac{\delta}{2}\}$.  
Along each of the two almost straight 
segments deform the third coordinate of the 
curve by making $n$ smooth vertical dips almost to the $x_1x_2$-plane, 
so that the curve remains symmetric with respect to the plane $\{x_2 = 
\frac{\delta}{2}\}$.  
Calling this resulting loop $\alpha$, it is easy to imagine a stable 
embedded minimal surface of genus $n-1$ with boundary $\alpha \cup 
\mbox{Ref}(\alpha)$, 
where $\mbox{Ref}$ is reflection through the $x_1x_2$-plane (see Figure~1).  
\end{remark}

\begin{figure}
\centerline{
        \hbox{
		\psfig{figure=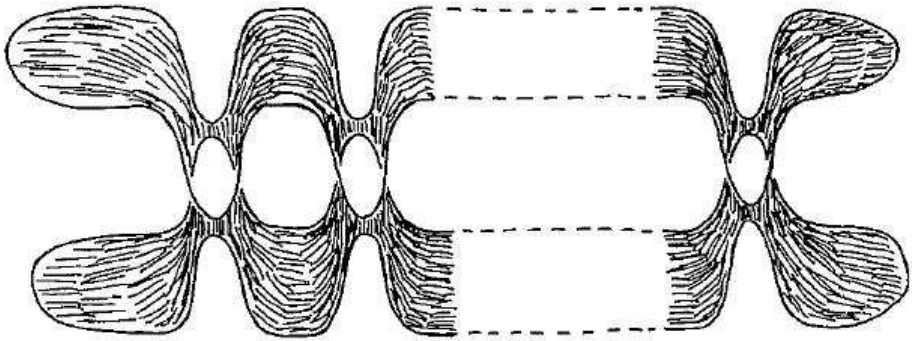,width=5in}
	}
}
	\caption{A Counterexample with Nonplanar Boundary Curves}
\end{figure}

\section{Topological Uniqueness for Least-Area Surfaces}  

In this section, we extend our considerations to include 
non-convex boundary curves.

Let $\alpha$ and $\beta$ be two $C^2$ planar Jordan curves (not 
necessarily convex) in parallel planes $P_0$ and $P_1$, respectively.  
Without loss of generality, we may assume $P_0 = \{x_3 =0\}$ and
$P_1 = \{x_3 = 1\}$.  
Let $\alpha(t)$ be a one-parameter family of planar curves 
(lying in planes $P_t = 
\{x_3=t\}$ parallel to $P_0$) that are vertical translations 
of the curve $\alpha = \alpha(0)$ in the direction of the plane 
$P_1$ at constant unit speed 
(making $\frac{\partial}{\partial t}\alpha(t)$ a constant vector) so that 
$\alpha(1)$ and $\beta$ both lie in $P_1$.  Consider the points of 
$\beta$ where $\alpha(1)$ and $\beta$ intersect.  Assume these intersections 
are transverse, and that there are a finite number $n$ of them.  We 
will call these points $\{p_1,...,p_n\}$ on 
$\beta$ the {\em crossing points} of $\alpha$ and $\beta$.  
Since $\alpha(1)$ intersects $\beta$ transversely, 
every vertex in the graph $\alpha(1) \cup \beta$ has exactly four 
edges emanating from it.  Furthermore, to each of the connected components of 
$P_1 \setminus \{\alpha(1) \cup \beta\}$ we can uniquely 
assign either a plus sign or a minus sign 
so that the following statement is true: 
No two adjacent components have the same sign, and the single 
component which is not compact has a minus sign (see Figure~2).  
Define $s$ to be the number of 
components with a plus sign.  Define $\cal W$ to be the union of 
the open components of $P_1 \setminus \{\alpha(1) \cup \beta\}$ which are assigned
a plus sign.  

For any set $B \subseteq P_1$, let $C(B) = \{(x_1,x_2,x_3) \in \bfR^3 \; | 
\; (x_1,x_2,1) \in B \}$ be the cylindrical domain in $\bfR^3$ over the 
set $B$.  Let $B_{\epsilon,P_1} (p)$ be an open 
$\epsilon$-ball in $P_1$ about a point 
$p \in P_1$.  

\begin{figure}
\centerline{
        \hbox{
		\psfig{figure=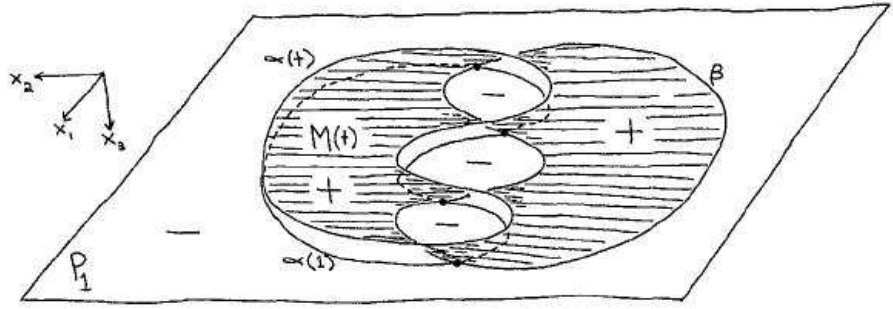,width=5in}
	}
}
	\caption{The Least-Area Surface $M(t)$ for $t$ close to $1$}
\end{figure}

\begin{theorem}
Suppose that $\cal W$ is the unique least-area set in $P_1$ with boundary equal 
to $\alpha(1) \cup \beta$.  Let $\{p_1,...,p_n\} = \alpha(1) \cap \beta$ be 
the crossing points.  Then there exists $\epsilon>0$ and $t_0 \in (0,1)$ so 
that for all $t \in [t_0,1)$, there is an orientable minimal 
surface $M(t) \subset 
\bfR^3$ with boundary $\alpha(t) \cup \beta$ having the following properties: 
\begin{list}%
{\arabic{num})}{\usecounter{num}\setlength{\rightmargin}{\leftmargin}}
\item $M(t)$ has least-area amongst all orientable 
surfaces with the same boundary.  
\item $M(t)$ is embedded.  
\item $M(t) \setminus C(\cup_{i=1}^n B_{\epsilon,P_1} (p_i))$ is 
a graph over the set ${\cal W} \subset P_1$.  
\item For each $i$, $M(t) \cap C(B_{\epsilon,P_1} (p_i))$ is homeomorphic to 
a disk with total absolute curvature less than $2 \pi$.  
\item $M(t)$ has genus $\frac{n-s}{2}$.  
\end{list}
\end{theorem}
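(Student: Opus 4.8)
The plan is to produce $M(t)$ by solving the Plateau problem in the class of orientable surfaces, to pin down its shape as $t \to 1$ by comparison with the flat minimizer ${\cal W}$, and then to read off the genus from an Euler characteristic count once the local picture near the crossing points is understood. First I would fix $\epsilon$ small enough that the balls $B_{\epsilon,P_1}(p_i)$ are pairwise disjoint and each contains only the one crossing point. For each fixed $t \in (0,1)$ the curves $\alpha(t)$ and $\beta$ are disjoint $C^2$ Jordan curves lying at heights $t$ and $1$, so geometric measure theory produces an area-minimizing integral current $M(t)$ with $\partial M(t) = \alpha(t) \cup \beta$; minimizing among integral currents is exactly minimizing among orientable surfaces, giving property~1. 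Interior regularity in codimension one together with boundary regularity for $C^2$ data (Allard, Hardt--Simon) shows the support of $M(t)$ is a smooth compact embedded minimal surface, giving property~2. Since $M(t)$ must join the two boundary circles, it is connected with exactly the two boundary components $\alpha(t)$ and $\beta$.

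Next, let $\Lambda:\bfR^3 \to P_1$ be orthogonal projection. Because $\Lambda$ is distance-nonincreasing and $\partial(\Lambda_\# M(t)) = \alpha(1) \cup \beta$, the constancy theorem forces $\Lambda_\# M(t) = {\cal W}$ as an integral current (the $\pm$ sign rule is precisely the winding number fixing the multiplicities), so $\mbox{area}(M(t)) \geq \mbox{area}({\cal W})$ by the hypothesis that ${\cal W}$ is the least-area set in $P_1$ with this boundary. For the reverse bound I would build an explicit competitor over ${\cal W}$ that equals height $1$ along the $\beta$-arcs of $\partial{\cal W}$, drops to height $t$ along the $\alpha(1)$-arcs, and is patched by small vertical ramps near the crossing points; its area is $\mbox{area}({\cal W}) + o(1)$ as $t\to 1$. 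Hence $\mbox{area}(M(t)) \to \mbox{area}({\cal W})$, and by uniqueness of the flat minimizer $M(t) \to {\cal W}$. Away from the crossing points this limit is smooth and horizontal, so Allard's theorem and curvature estimates upgrade the convergence to smooth graphical convergence; for $t$ close to $1$ this gives property~3, that $M(t)\setminus C(\cup_i B_{\epsilon,P_1}(p_i))$ is a single-valued graph over ${\cal W}$.

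The convergence degenerates at the crossing points, where ${\cal W}$ has a pinch, and this local analysis is the main obstacle. Near $p_i$ the two boundary arcs — one arc of $\beta$ at height $1$ and one arc of $\alpha(t)$ at height $t$ — have transversally crossing projections and vertical separation $1-t$. I would blow up by the factor $1/(1-t)$ about $(p_i,1)$: the rescaled boundary arcs converge to two straight lines, transverse in projection and at unit height apart, and (using the minimizing property and boundary regularity for uniform curvature bounds) the rescaled surfaces subconverge to an embedded minimal surface $M_\infty^i$ spanning those two lines. The crux is to show $M_\infty^i$ is a disk with total curvature strictly less than $2\pi$: embeddedness and the two-line boundary should force the disk topology, while the bound $\int |K| < 2\pi$ amounts to showing the Gauss map of this saddle-type piece omits enough of $S^2$. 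Since total absolute curvature is scale-invariant and behaves continuously under the convergence, the disk topology and the bound transfer back to $M(t)\cap C(B_{\epsilon,P_1}(p_i))$ for $t$ near $1$, giving property~4.

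Finally I would compute the genus by a Mayer--Vietoris count, writing $M(t) = A \cup B$ with $A = M(t)\setminus C(\cup_i B_{\epsilon,P_1}(p_i))$ and $B = \cup_i\big(M(t)\cap C(B_{\epsilon,P_1}(p_i))\big)$. By property~3, $A$ is a graph over ${\cal W}$ with the pinch-neighborhoods removed, hence a disjoint union of the $s$ plus-faces, each an open disk, so $\chi(A) = s$; by property~4, $B$ is $n$ disks, so $\chi(B) = n$; and $A\cap B$ is the $2n$ arcs lying over ${\cal W}\cap\partial B_{\epsilon,P_1}(p_i)$, so $\chi(A\cap B) = 2n$. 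Therefore
$$\chi(M(t)) = \chi(A) + \chi(B) - \chi(A\cap B) = s + n - 2n = s - n.$$
Since $M(t)$ is a connected orientable surface with two boundary circles, $\chi(M(t)) = -2g$, whence $g = \frac{n-s}{2}$, which is property~5.
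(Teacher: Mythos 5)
Your overall architecture --- existence and regularity via minimizing currents, convergence to ${\cal W}$ away from the crossing points, a local model at each crossing point, and a Mayer--Vietoris genus count --- runs parallel to the paper's, and parts of it are sound: your competitor surface with area $\mbox{Area}({\cal W}) + o(1)$ is essentially the paper's surface $\breve{M}(t)$ from its Step 3, and your Euler characteristic computation $\chi(M(t)) = s-n$ agrees with the paper's concluding count. But there is a genuine gap at property 4, which is the crux of the theorem, and you say so yourself: the claims that embeddedness and the two-line boundary ``should force the disk topology'' and that the curvature bound ``amounts to showing the Gauss map omits enough of $S^2$'' are exactly the statements that need proof, and no argument is offered for either. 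Worse, even granting a full classification of the blow-up limit $M^i_\infty$, the transfer back to $M(t)$ fails as stated: subconvergence of the $1/(1-t)$-rescalings controls $M(t)$ only on regions of diameter $O(1-t)$ about $p_i$ (compact subsets of the rescaled picture), while property 4 concerns the fixed-scale piece $M(t)\cap C(B_{\epsilon,P_1}(p_i))$; and your property 3 controls $M(t)$ only outside the $\epsilon$-cylinders. Nothing in your proposal rules out topology (small handles or necks) hiding in the intermediate annular region at scales between $1-t$ and $\epsilon$, and neither disk topology nor total absolute curvature passes from the blow-up limit to $M(t)$ without curvature control precisely in that region. (A smaller unproven point: you assert $M(t)$ is connected because it ``must join the two boundary circles''; this requires an area comparison ruling out the union of the two flat disks spanning $\alpha(t)$ and $\beta$ separately.)

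The paper's way around this is worth internalizing, because it never needs any information about the interior of $M(t)$ inside the cylinder. Its Steps 4 and 5 use small catenoid pieces as barriers (via the maximum principle), together with Schoen's curvature estimate and Lemma 3.1, to show that the normal of $M(t)$ is nearly vertical everywhere on $M(t)\setminus C(N_{2\epsilon,P_1}(\alpha(1)\cap\beta))$, in particular along the curves where $M(t)$ meets the cylinder over $\partial B_{2\epsilon,P_1}(p_i)$. Hence $\partial\bigl(M(t)\cap C(B_{2\epsilon,P_1}(p_i))\bigr)$ consists of two boundary arcs of small curvature and two nearly-circular graphical arcs, meeting at exterior angles close to $\pi/2$, so it is an extremal curve of total curvature less than $4\pi$. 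The Meeks--Yau result (section 3 of [MY]) then forces the least-area piece it bounds to be a disk, and Gauss--Bonnet converts the boundary curvature bound into $\int |K|\, dA < 2\pi$. To rescue your route you would need both the helicoidal classification of $M^i_\infty$ (which the paper's own Remark flags as requiring ``further arguments'') and a separate scale-bridging graphicality statement covering the annulus between radius $O(1-t)$ and radius $\epsilon$; the boundary-curvature argument sidesteps both.
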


\begin{remark}
The assumption that $M(t)$ is orientable 
is natural, since $\alpha(t) \cup \beta$ is extremal
and therefore any compact non-orientable minimal 
surface with this boundary cannot be 
embedded.  And any surface which is not embedded cannot be least-area, since one
can easily decrease area by adding topology and desingularizing at intersection 
points.  
\end{remark}

\begin{remark}
The assumption that $\cal W$ is the least-area set 
in $P_1$ with boundary equal to $\alpha(1) \cup \beta$ is really necessary.  
For $\alpha$ and $\beta$ for which this assumption does not 
hold, it follows from the results of \cite{HS} and 
\cite{D} that there exists an embedded minimal surface with the same boundary 
and with area 
strictly less than that of the $M(t)$ described above.  Again, 
embeddedness of this new surface with lesser area implies that it is also 
orientable.  
\end{remark}

\begin{remark}
With further arguments, one can show that in a small vertical neighborhood of 
each crossing point, $M(t)$ is "approximately helicoidal" in the following 
sense:  
For each crossing point $p_j \in P_1$, there exists a portion $S 
\subset \bfR^3$ of a helicoid 
bounded by a pair of infinite lines and with total absolute curvature less than 
$2 \pi$, and there exist homotheties $\phi_t$ centered at $p_j$ for 
each $t \in (0,1)$, such that
for any sequence $t_i \in (0,1)$ with $t_i \rightarrow 1$, 
$\{\phi_{t_i}(M(t_i))\}_{i=1}^\infty$ converges to $S$.  
(We say that a sequence of surfaces $\{S_i\}_{i=1}^\infty$ {\em converges} 
as $i \rightarrow \infty$ to 
a surface $S$ in $\bfR^3$ if, for any compact region $B$ of $\bfR^3$, 
there exists an integer $N_B$ such that for $i > N_B$, 
$S_i \cap B$ is a normal graph over $S$, and 
$\{ S_i \cap B \}_{i=1}^\infty$ converges to $S$ in the $C^1$-norm.)  
\end{remark}

Before giving the proof of Theorem 3.1, we give an useful preliminary 
lemma.  For an oriented surface ${\cal M} \subset \bfR^3$, 
let dist$_{\cal M}(A,B)$ be the distance in 
$\cal M$ between two sets $A \subset {\cal M}$ and $B \subset {\cal M}$.
Let $\partial {\cal M}$ be the boundary of $\cal M$.  
For each point $q \in {\cal M}$, let $K_q$ be the Gaussian curvature of 
$\cal M$ at $q$, and let $\vec{N}_q$ be the oriented unit normal vector 
of $\cal M$ at $q$.  Let $\vec{e}_3 = (0,0,1)$, and let $\langle \cdot , 
\cdot \rangle$ be the standard inner product on $\bfR^3$.  

\begin{lemma}
Let $\cal M$ be an oriented minimal 
surface lying in the closed region 
$\{t \leq x_3 \leq 1 \}$ between the two 
distinct planes $P_t$ and $P_1$, and consider a point $p \in {\cal M}$.  
Suppose there exists a positive constant $\cal A$ such that 
dist$_{\cal M}(p,\partial {\cal M}) \geq {1}/{\sqrt{\cal A}}$ and 
$|K_q| < {\cal A}$ for 
all $q \in {\cal M}$.  Then, for ${\cal T} := 
1 - 2 \sqrt{\cal A} (1-t)$, 
\[ | \langle \vec{N}_p , \vec{e}_3 \rangle |^2 > {\cal T} \; . \]
\end{lemma}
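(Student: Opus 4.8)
The plan is to convert the statement into a gradient estimate for the height function and then prove that estimate by a one-dimensional comparison along a geodesic. Let $h$ denote the restriction of $x_3$ to ${\cal M}$. Decomposing the constant vector $\vec{e}_3$ into its components tangent and normal to ${\cal M}$, the tangential component is the intrinsic gradient $\nabla h$ and the normal component is $\langle \vec{e}_3,\vec{N}\rangle\vec{N}$, so that $|\nabla h|^2 = 1 - \langle \vec{N},\vec{e}_3\rangle^2$ at every point. Consequently the desired inequality $\langle\vec{N}_p,\vec{e}_3\rangle^2 > {\cal T}$ is equivalent to the gradient bound $|\nabla h|^2(p) < 2\sqrt{{\cal A}}(1-t)$, and it suffices to establish the latter. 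Write $g = |\nabla h|(p)$; if $g=0$ there is nothing to prove (since ${\cal T}<1$ because the planes are distinct), so assume $g>0$.

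To bound $g$, I would follow $h$ along the direction of steepest ascent. Let $\gamma$ be the unit-speed geodesic in ${\cal M}$ with $\gamma(0)=p$ and $\gamma'(0)=\nabla h(p)/g$, and set $f(s)=h(\gamma(s))$. The hypothesis $\mbox{dist}_{\cal M}(p,\partial{\cal M}) \geq 1/\sqrt{{\cal A}}$ guarantees that $\gamma$ stays in the interior of ${\cal M}$, where the curvature bound is valid, for arc length up to $r := 1/\sqrt{{\cal A}}$. Then $f'(0) = \langle\nabla h(p),\gamma'(0)\rangle = g$, and since $\gamma$ is a geodesic, $f''(s) = \mbox{Hess}\,h\,(\gamma',\gamma')$. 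Here minimality enters: because $h$ is a linear coordinate, its ambient Hessian vanishes and the Gauss formula gives $\mbox{Hess}\,h\,(X,X) = \langle \mbox{II}(X,X),\vec{e}_3\rangle$ for unit tangent $X$, while on a minimal surface the principal curvatures are $\pm\sqrt{|K|}$, so $|\mbox{II}(X,X)| \leq \sqrt{|K|}$. Hence $|f''(s)| \leq \sqrt{|K_{\gamma(s)}|} < \sqrt{{\cal A}}$ all along $\gamma$.

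Integrating this strict bound twice completes the argument. From $|f''| < \sqrt{{\cal A}}$ one gets $f'(s) > g - \sqrt{{\cal A}}\,s$ for $s\in(0,r]$. Since $g=|\nabla h|(p)\leq 1$, the critical value $s^* = g/\sqrt{{\cal A}}$ satisfies $s^*\leq r$, so $\gamma(s^*)$ still lies in the interior and integrating once more gives
\[ f(s^*) - f(0) > \int_0^{s^*}\left(g - \sqrt{{\cal A}}\,\sigma\right)d\sigma = \frac{g^2}{2\sqrt{{\cal A}}}\,. \]
On the other hand ${\cal M}\subseteq\{t\leq x_3\leq 1\}$ forces $f(0),f(s^*)\in[t,1]$, whence $f(s^*)-f(0)\leq 1-t$. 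Combining the two gives $g^2/(2\sqrt{{\cal A}}) < 1-t$, i.e. $g^2 < 2\sqrt{{\cal A}}(1-t)$, which is exactly the required gradient bound; feeding it back into $|\nabla h|^2 = 1-\langle\vec{N},\vec{e}_3\rangle^2$ yields $\langle\vec{N}_p,\vec{e}_3\rangle^2 = 1-g^2 > 1 - 2\sqrt{{\cal A}}(1-t) = {\cal T}$.

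The step I expect to require the most care is the Hessian computation and its reliance on minimality. The bound $|f''| < \sqrt{{\cal A}}$ hinges on two facts: that $h$ is an affine function of the ambient coordinates, so only the second fundamental form contributes to the intrinsic Hessian; and that minimality pins the principal curvatures at $\pm\sqrt{|K|}$, giving the sharp constant $\sqrt{|K|}$ rather than a larger multiple of the norm of the shape operator. The remaining delicate point is ensuring that the comparison geodesic of length $s^*\leq r$ never leaves the interior of ${\cal M}$, which is precisely what the distance hypothesis $\mbox{dist}_{\cal M}(p,\partial{\cal M})\geq 1/\sqrt{{\cal A}}$ together with $g\leq 1$ provides. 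The two integrations and the height comparison are then routine.
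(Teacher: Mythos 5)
Your proof is correct and follows essentially the same argument as the paper's: a unit-speed geodesic from $p$ in the steepest-ascent (most vertical) tangent direction, the bound $|f''| < \sqrt{\cal A}$ on the height along that geodesic coming from minimality (normal curvature at most $\sqrt{|K|}$), two integrations up to $s^* = g/\sqrt{\cal A}$, and comparison with the slab width $1-t$. The only difference is cosmetic: the paper runs the identical estimate as a proof by contradiction (assuming $\langle \vec{N}_p,\vec{e}_3\rangle^2 \leq {\cal T}$ and forcing the geodesic to climb more than $1-t$), whereas you phrase it directly as a gradient bound, and you justify the second-derivative bound via the Hessian/Gauss formula rather than the paper's curvature-of-the-geodesic language.
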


\begin{proof}
Note that ${\cal T} < 1$, and that 
the result is obvious if ${\cal T} < 0$, so we assume ${\cal T} \in 
[0,1)$.  
Suppose that $| \langle \vec{N}_p , \vec{e}_3 \rangle |^2 \leq 
{\cal T}$.  Then there exists a unit vector $\vec{T} \in T_p{\cal M}$ so 
that $\langle \vec{T}, \vec{e}_3 \rangle \geq \sqrt{1-{\cal T}}$, 
and there exists a unit speed geodesic $\gamma(s) \subset {\cal M}, 
s \in [0,{1}/{\sqrt{\cal A}}]$ so 
that $\gamma(0) = p$ and $\gamma^\prime (0) = 
\vec{T}$, where $\prime = \frac{\partial}{\partial 
s}$.  Let $k_g(s)$ be the 
geodesic curvature of $\gamma(s)$.  Since 
$|K_q| < {\cal A}$ and $\cal M$ is minimal, $|k_g(s)| < 
\sqrt{\cal A}$ for all $s \in [0,{1}/{\sqrt{\cal A}}]$.  Thus 
$|\gamma^{\prime 
\prime}(s)| < \sqrt{\cal A}$.  Writing $\gamma(s) = (\gamma_1(s),\gamma_2(s), 
\gamma_3(s))$ in terms of coordinates in $\bfR^3$, we have 
$|\gamma_3^{\prime \prime}(s)| < \sqrt{\cal A}$.  Then for 
$s \in [0,{1}/{\sqrt{\cal A}}]$, 
\[ |\gamma_3^\prime(s) - \gamma_3^\prime(0)| = 
\left| \int_0^s \gamma_3^{\prime \prime} \right| \leq 
\int_0^s |\gamma_3^{\prime \prime} | < \sqrt{\cal A} \cdot s \; \; , 
\]
and thus $\gamma_3^\prime(s) > \gamma_3^\prime(0) - 
\sqrt{\cal A} \cdot s$.  Therefore 
\[ \int_0^\frac{\gamma_3^\prime(0)}
{\sqrt{\cal A}} 
\gamma_3^\prime(s) ds > \int_0^\frac{\gamma_3^\prime(0)}
{\sqrt{\cal A}} (\gamma_3^\prime(0) - 
\sqrt{\cal A} \cdot s) ds = 
\gamma_3^\prime(0) \cdot 
\frac{\gamma_3^\prime(0)}
{\sqrt{\cal A}} - \frac{1}{2} \sqrt{\cal A} 
\left( \frac{\gamma_3^\prime(0)}
{\sqrt{\cal A}} \right)^2 
\; \; , \]
and so 
\[ \gamma_3 \left( \frac{\gamma_3^\prime(0)}
{\sqrt{\cal A}} \right) - \gamma_3(0) > 
\frac{(\gamma_3^\prime(0))^2}{2\sqrt{\cal A}} \geq 
\frac{1 - {\cal T}}{2\sqrt{\cal A}} = 1 - t \; \; , 
\]
since $\gamma_3^\prime(0) = \langle \vec{T}, \vec{e}_3 \rangle \geq 
\sqrt{1 - {\cal T}}$.  Hence the 
vertical change $\gamma_3({\gamma_3^\prime(0)}/{\sqrt{\cal A}}) 
- \gamma_3(0)$ of the geodesic 
$\gamma(s) \subset {\cal M}$ is greater than $1-t$, and 
since $1-t$ is the distance between the planes $P_t$ and $P_1$, 
$\cal M$ cannot lie between $P_t$ and $P_1$.  
This contradiction proves the lemma.  
\end{proof}

We now give the proof of Theorem 3.1:

\begin{proof}
It was shown in \cite{HS} that there exists an orientable minimal 
surface $M(t)$ with boundary $\alpha(t) \cup \beta$ that 
has least-area amongst all orientable surfaces with the same boundary, and 
that $M(t)$ is embedded, with finite genus, 
and with bounded curvature.  $M(t)$ cannot have any 
interior branch points (\cite{O2}), and since its boundary is extremal, 
it cannot have any boundary branch points (\cite{N2}, section 366).  

We divide the proof into steps.  

We fix $\epsilon$ to be a small positive number such that $\frac{1}{\epsilon}$ is 
much greater than the maximum planar curvature of $\alpha \cup \beta$.  (In steps 
3 and 6 we will add further constraints on $\epsilon$.)  Let 
$N_{\epsilon,P_1}(\alpha(1) \cup \beta) 
:= \{ p \in P_1 \; 
| \; \mbox{dist}(p,\alpha(1) \cup \beta) < \epsilon \}$ be 
the $\epsilon$ neighborhood of $\alpha(1) \cup \beta$ in $P_1$.  

{\bf Step 1:} {\em $M(t) \setminus C(N_{\epsilon,P_1}(\alpha(1) \cup 
\beta))$ is a collection of 
graphs over $P_1$ for $t$ sufficiently close to $1$.  In fact, for 
any $y \in (0,1)$, there exists a $t_y \in (0,1)$ such that 
for all $t \in [t_y,1)$, 
$| \langle \vec{N}_p , \vec{e}_3 \rangle | > y$ for all $p \in 
M(t) \setminus C(N_{\epsilon,P_1}(\alpha(1) \cup \beta))$.  }


By Corollary 4 of 
\cite{Sc2} there exists a universal constant 
$c \geq 1$ such that $|K_q| < \frac{c}{r^2}$, where $K_q$ is the Gaussian curvature 
at some point $q$ of a stable minimal surface and $r$ is the distance 
from $q$ to the boundary of the surface.  Thus for all $q \in 
M(t) \setminus C(N_{\epsilon/2,P_1}(\alpha(1) \cup \beta))$, 
$|K_q| < \frac{4c}{\epsilon^2}$.  We now apply Lemma 3.1 with 
${\cal M} = M(t) \setminus C(N_{\epsilon/2,P_1}(\alpha(1) \cup \beta))$ and 
${\cal A} = \frac{4c}{\epsilon^2}$.  Let $p$ be any point in 
$M(t) \setminus C(N_{\epsilon,P_1}(\alpha(1) \cup \beta))$.  Note 
that dist$_{\cal M} 
(p,\partial {\cal M}) \geq \epsilon /2 = \sqrt{c} / \sqrt{\cal A} \geq 
1/ \sqrt{\cal A}$.  Hence by Lemma 3.1, 
\[ | \langle \vec{N}_p , \vec{e}_3 \rangle |^2 > 1 - 2 
\frac{2 \sqrt{c}}{\epsilon} (1-t) \; \; . \] 
So for any $t \in [1 - \frac{\epsilon}{4 \sqrt{c}}, 1)$, we have 
$| \langle \vec{N}_p , \vec{e}_3 \rangle |^2 > 0$.  Thus 
for $t \in [1 - \frac{\epsilon}{4 \sqrt{c}}, 1)$ the normal vector is 
never horizontal on $M(t) \setminus C(N_{\epsilon,P_1}(\alpha(1) \cup 
\beta))$, implying that 
$M(t) \setminus C(N_{\epsilon,P_1}(\alpha(1) \cup \beta))$ is 
a collection of graphs.  Furthermore, for 
any $y \in (0,1)$, let $t_y = 1 - (1-y^2) (\epsilon / (4\sqrt{c}) )$, and then 
$| \langle \vec{N}_p , \vec{e}_3 \rangle | > y$ for all $p \in 
M(t) \setminus C(N_{\epsilon,P_1}(\alpha(1) \cup \beta))$ if 
$t \in [t_y,1)$.  Step 1 is shown.  


{\bf Step 2:} {\em $M(t) \setminus C(N_{\epsilon,P_1}(\alpha(1) \cup 
\beta))$ is a single 
graph over $P_1$ for $t$ sufficiently close to $1$.}

Suppose that for some point $q \in P_1 
\setminus N_{\epsilon,P_1}(\alpha(1) \cup \beta)$, 
$C(B_{\frac{\epsilon}{2},P_1} (q) 
\setminus N_{\epsilon,P_1}(\alpha(1) \cup \beta))$ contains two graphs.  
These two graphs are bounded by a pair of curves 
$C_1$ and $C_2$ in the boundary 
$\partial ( C(B_{\frac{\epsilon}{2},P_1} 
(q) \setminus N_{\epsilon,P_1}(\alpha(1) \cup \beta)) )$; 
and $C_1 \cup C_2$ must lie between 
$P_t$ and $P_1$, since $M(t)$ itself lies between $P_t$ and $P_1$.  Note that 
this pair of graphs has area at least $2 \cdot 
\mbox{Area}(B_{\frac{\epsilon}{2},P_1} 
(q) \setminus N_{\epsilon,P_1}(\alpha(1) \cup \beta))$.  Since 
$M(t)$ is least-area, this pair of graphs must be least-area with respect to 
its boundary $C_1 \cup C_2$.  However, $C_1 \cup C_2$ also bounds an annulus that 
has area less than $(1-t) \mbox{Length}(\partial (B_{\frac{\epsilon}{2},P_1} 
(q) \setminus N_{\epsilon,P_1}(\alpha(1) \cup \beta)))$, and 
$(1-t) \mbox{Length}(\partial (B_{\frac{\epsilon}{2},P_1} 
(q) \setminus N_{\epsilon,P_1}(\alpha(1) \cup \beta))) < 
2 \mbox{Area}(B_{\frac{\epsilon}{2},P_1} 
(q) \setminus N_{\epsilon,P_1}(\alpha(1) \cup \beta))$, 
if $t$ is sufficiently close to 1.  
This contradiction implies step 2.  

{\bf Step 3:} {\em $C({\cal W} \setminus 
N_{\epsilon,P_1}(\alpha(1) \cup \beta) ) \cap M(t) = 
M(t) \setminus C(N_{\epsilon,P_1}(\alpha(1) \cup \beta))$.}

Let 
$\tilde{\cal W}$ be the union of open 
components of $P_1 \setminus \{ \alpha(1) \cup \beta \}$ in $P_1$ whose 
intersections with 
the projection of $M(t) \setminus C(N_{\epsilon,P_1}(\alpha(1) \cup 
\beta))$ are nonempty.  Clearly 
the boundary of $\tilde{\cal W}$ in $P_1$ equals $\alpha(1) \cup \beta$, thus 
Area($\tilde{\cal W}$) $>$ Area ($\cal W$) if $\tilde{\cal W} \neq 
{\cal W}$, by assumption.  Suppose $\tilde{\cal W} \neq {\cal W}$.  

Since $M(t) \setminus C(N_{\epsilon,P_1}(\alpha(1) \cup \beta))$ is 
a graph over 
$\tilde{\cal W} \setminus N_{\epsilon,P_1}(\alpha(1) \cup \beta)$, we have 
Area($M(t)$)$\geq$Area($\tilde{\cal W} \setminus N_{\epsilon,P_1}(\alpha(1) 
\cup \beta)$)$>$Area($\tilde{\cal W}$)$-2 \epsilon (\mbox{Length}(\alpha) + 
\mbox{Length}(\beta))$.  (The final strict inequality follows from the fact 
that $\frac{1}{\epsilon}$ is 
much greater than the maximum planar curvature of $\alpha \cup \beta$.)  
Choosing $\epsilon$ smaller if necessary, we may 
assume $2 \epsilon (\mbox{Length}(\alpha) + 
\mbox{Length}(\beta)) < (1/2)$(Area($\tilde{\cal W}$)$ - $Area(${\cal W}$)).  
Therefore 
\[ 
\mbox{Area}(M(t))>\mbox{Area}(\tilde{\cal W})- 
\frac{1}{2} (\mbox{Area}(\tilde{\cal W}) - \mbox{Area}({\cal W})) \; \; .  
\]
For any $\delta > 0$, there exists a $t_\delta \in (0,1)$ so that 
for all $t \in [t_\delta, 1)$, there is an orientable surface 
$\breve{M}(t)$ bounded by $\alpha(t) \cup \beta$ satisfying 
Area($\breve{M}(t)$) $<$ Area($\cal W$)$+\delta$: just consider a surface that is a 
graph away from crossing points 
over each open component of $P_1 \setminus (\alpha(1) \cup \beta)$ assigned 
a plus sign, and connect these graphs by 
small disks twisted by approximately $\pi$ radians at each crossing point.  
Choosing $\delta = (1/2)$(Area($\tilde{\cal W}$)$ - $Area(${\cal W}$)), and 
choosing $t \geq t_\delta$, we have 
\[ 
\mbox{Area}(M(t))>\frac{1}{2} (\mbox{Area}(\tilde{\cal W}) + 
\mbox{Area}({\cal W}))>\mbox{Area}(\breve{M}(t)) \; \ .  
\] 
But $M(t)$ is least-area, so this contradiction implies 
$\tilde{\cal W} = {\cal W}$.  This shows step 3.  

To prepare for step 4, consider 
$N_{\epsilon,P_1}(\alpha(1) \cap \beta) := \{ p \in P_1 \; 
| \; \mbox{dist}(p,\alpha(1) \cap \beta) < \epsilon \}$, 
the $\epsilon$ neighborhood in $P_1$ of the crossing points.  

{\bf Step 4:} {\em Along the curves $(\alpha(t) \cap 
\beta) \setminus C(N_{\epsilon,P_1}(\alpha(1) \cap \beta))$ the 
normal vector to $M(t)$ must 
become arbitrarily close to vertical, uniformly as $t \rightarrow 1$.}  

To show this, for $t$ sufficiently close to $1$ we 
consider a compact connected piece of a catenoid 
${\cal C}(r_1,r_2,t)$ with the following properties: 
\begin{itemize}
\item ${\cal C} = {\cal C}(r_1,r_2,t)$ has vertical axis and is 
bounded by circles 
$P_t \cap {\cal C}$ and $P_1 \cap {\cal C}$, of radii $r_2$ and 
$r_1$ respectively, in the planes $P_t$ and $P_1$.  
\item $r_2$ is much smaller than $r_1$, and $r_1$ is much smaller than $\epsilon$.  
\item The normal vector to $\cal C$ is close to vertical everywhere on $\cal C$.  
\end{itemize}
Choosing $r_2$ and $r_1$ small enough 
and choosing $t$ close enough to 1, $\cal C$ can be placed in the 
vertical cylinder over any open component of $P_1 \setminus 
(\alpha(1) \cup \beta)$ assigned a minus sign, 
so that ${\cal C} \cap M(t) = \emptyset $, by step 3.  
We can then translate $\cal C$ horizontally until it makes first 
contact with $M(t)$ at any given point $p \in 
\alpha(t) \setminus C(N_{\epsilon,P_1}(\alpha(1) \cap \beta))$.  
(The maximum principle implies that first contact cannot be at an 
interior point of $M(t)$.  See Figure 3.)  By the maximum principle, 
the normal vector of $M(t)$ at $p$ must be even 
more vertical than it is at the same point on $\cal C$.  Hence the normal vector of 
$M(t)$ at $p$ is close to vertical.  
Reflecting $\cal C$ through the plane $P_{\frac{1+t}{2}}$, the same argument shows
that the normal vector of $M(t)$ any point of 
$\beta \setminus C(N_{\epsilon,P_1}(\alpha(1) \cap \beta))$ must 
also be close to vertical.  

Additionally, we have just shown 
that the projection of $M(t) \setminus C(N_{\epsilon,P_1}(\alpha(1) \cap 
\beta))$ to $P_1$ lies entirely inside 
$\cal W$, for otherwise $\cal C$ (or its reflection through 
$P_{\frac{1+t}{2}}$) could be translated in such a way as to 
make first contact at an interior point.  

At that first boundary 
point $p \in \partial M(t)$ of contact, the catenoid 
$\cal C$ (or its reflection through 
$P_{\frac{1+t}{2}}$) makes a small angle with the 
horizontal plane containing the boundary curve.  Among all catenoid pieces of 
the type ${\cal C}(r_1,r_2,t)$ contacting 
$M(t)$ only at $p$ there is a greatest lower bound 
$\theta(p,t)$ for this angle, and 
for all boundary points $p \in (\alpha(t) \cup \beta) 
\setminus C(N_{\epsilon,P_1}(\alpha(1) \cap \beta))$, the lower bound 
$\theta(p,t)$ approaches zero as $t$ approaches 1 (see Figure~3).  Let 
\[ \theta_0(t) = \max\{\theta(p,t) \; : \; p \in 
(\alpha(t) \cup \beta) 
\setminus C(N_{\epsilon,P_1}(\alpha(1) \cap \beta)) \} \; . \] 
Since $(\alpha(t) \cup \beta) 
\setminus C(N_{\epsilon,P_1}(\alpha(1) \cap \beta))$ is compact, 
$ \lim_{t \rightarrow 1} \theta_0(t) = 0$.   

\begin{figure}
\centerline{
        \hbox{
		\psfig{figure=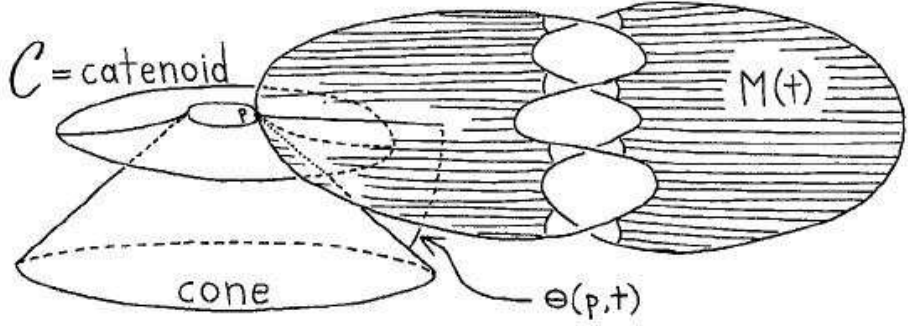,width=5in}
	}
}
	\caption{A Pictoral View of $\theta(p,t)$}
\end{figure}

{\bf Step 5:} {\em $M(t) 
\setminus C(N_{2\epsilon,P_1}(\alpha(1) \cap \beta))$ is a graph 
over $\cal W$, for $t$ sufficiently close to $1$.  
In fact, for any $y \in (0,1)$, there exists a $t_y \in (0,1)$ such that 
if $t \in [t_y,1)$, then $|\langle \vec{N}_p, 
\vec{e}_3 \rangle| > y \; \forall p \in M(t) 
\setminus C(N_{2\epsilon,P_1}(\alpha(1) \cap \beta))$. }

Since we have already proved this for $M(t) 
\setminus C(N_{\epsilon,P_1}(\alpha(1) \cup \beta))$ in steps 
1 and 2, we may restrict our considerations here to any points $p \in 
M(t) \cap C(N_{\epsilon,P_1}(\alpha(1) \cup \beta) 
\setminus N_{2\epsilon,P_1}(\alpha(1) \cap \beta))$.  
Let $\hat{p}$ be 
the vertical projection of $p$ into $P_1$, and let $r$ be the 
distance of $\hat{p}$ to $\alpha(1) \cup \beta$.  (Note that $r \leq 
\epsilon$.)  Assume for the moment that the closest point in $\alpha(t) 
\cup \beta$ to $p$ is a point in $\beta$.  

Consider the bounded cylinder $Cyl =
C(B_{\frac{r}{2},P_1}(\hat{p})) \cap \{ 1 - 2r \tan{\theta_0}(t) \leq x_3 
\leq 1\}$.  $Cyl$ is a finite 
solid cylinder 
of radius $\frac{r}{2}$ and height $2 r \tan{\theta_0}(t)$ with two planar 
horizontal circular 
boundary disks.  Let $\partial Cyl$ be its boundary.  
It follows from the construction of the catenoid 
barriers in the previous step (see Figure 3) that $M(t) 
\cap \partial Cyl$ is contained entirely in the cylindrical part 
of $\partial Cyl$, and is disjoint from the upper and 
lower horizontal boundary disks.  
To avoid unnecessary complications near the 
crossing points, we have increased $\epsilon$ to $2\epsilon$.  

Let ${\cal M} = Cyl \cap M(t)$.  Then $\cal M$ lies between the planes 
$P_{1-2r \tan{\theta_0}(t)}$ and $P_1$.  Since for any point $q \in {\cal M}$, 
dist$_{M(t)} (q, \alpha(t) \cup \beta) \geq r/2$, 
there exists a universal constant 
$c \geq 1$ such that $|K_q| < \frac{4c}{r^2}$ \cite{Sc2}, like 
in step 1.  Let ${\cal A} = 
4c/(r^2)$, and thus dist$_{\cal M} (p, \partial {\cal M}) \geq 
r/2 = \sqrt{c}/\sqrt{\cal A} \geq 1/\sqrt{\cal A}$.  Hence we may apply 
Lemma 3.1 to conclude that 
\[
|\langle \vec{N}_p, 
\vec{e}_3 \rangle|^2 > 1 - 2 \frac{2\sqrt{c}}{r} 
2r \tan{\theta_0}(t) \; \; . 
\]
For any $y \in (0,1)$, choose $t_y \in (0,1)$ so that 
for all $t \in [t_y,1)$, 
\[ \tan{\theta_0}(t) \leq \frac{1-y^2}{8\sqrt{c}} \; \; . \] 
Then, for $t \in [t_y,1)$, $|\langle \vec{N}_p, 
\vec{e}_3 \rangle| > y \; \forall p \in M(t) 
\setminus C(N_{2\epsilon,P_1}(\alpha(1) \cap \beta))$.  

When the closest point in $\alpha(t) \cup \beta$ to $p$ is 
a point in $\alpha(t)$, we can reflect $M(t)$ across the plane 
$P_{(1+t)/2}$ and we have the same situation, but the roles of 
$\alpha(t)$ and $\beta$ are reversed.  So the above argument applies to 
this case as well.  Step 5 is shown.  



{\bf Step 6:} {\em For each crossing point $p_i$, 
$C(B_{2\epsilon,P_1} (p_i) ) \cap  
M(t)$ is a disk with total curvature less than $2\pi$, for $t$ sufficiently 
close to $1$.}  

We could have originally chosen $\epsilon$ small enough 
so that the two curve segments $\alpha(t) \cap 
C(B_{2\epsilon,P_1} (p_i))$ and 
$\beta \cap C(B_{2\epsilon,P_1} (p_i))$ have 
total curvature as close to zero as we wish.  We may then 
choose $t$ sufficiently close 
to $1$ so that $\partial (M(t) \cap C(B_{2\epsilon,P_1} (p_i))) \setminus 
\{ \alpha(t) \cup \beta \}$ approximates arbitrarily closely a pair of 
circular arcs, each 
of less than $\pi$ radians, and so that the exterior angles at the 
four singular points of $\partial (M(t) \cap C(B_{2\epsilon,P_1} (p_i)))$ are 
each arbitrarily close to 
$\frac{\pi}{2}$.  (This follows from the fact, as shown in steps 1 and 5, 
that for any given $y \in (0,1)$ we have $|\langle \vec{N}_q, 
\vec{e_3} \rangle| > y$ on $M(t) \setminus 
C(N_{2\epsilon,P_1}(\alpha(1) \cap \beta))$ for 
$t$ sufficiently close to 1.)  So for $\epsilon$ small enough 
and $t$ sufficiently close to 
$1$, $\partial (M(t) \cap 
C(B_{2\epsilon,P_1} (p_i)))$ has total curvature less than $4\pi$.  

If $M(t) \cap C(B_{2\epsilon,P_1} (p_i))$ is not a disk, then 
there exists a smooth Jordan curve $\sigma \subset$ Int($M(t) \cap 
C(B_{2\epsilon,P_1} (p_i))$) with the 
following properties:
\begin{itemize}
\item $\sigma$ is a smooth approximation to $\partial (M(t) \cap 
C(B_{2\epsilon,P_1} (p_i)))$ that lies on the 
boundary of a convex region in $\bfR^3$ and 
has total curvature less than $4\pi$.  (This property is possible because 
$\partial (M(t) \cap C(B_{2\epsilon,P_1} (p_i)))$ is extremal.)  
\item $\sigma$ separates $M(t) \cap 
C(B_{2\epsilon,P_1} (p_i))$ into two components, one component 
${\cal A}_1$ is bounded by $\sigma$, and the other component ${\cal A}_2$ is 
an annulus bounded by $\sigma \cup \partial (M(t) \cap 
C(B_{2\epsilon,P_1} (p_i)))$.  
\item The component ${\cal A}_1$ is not a disk.  
\end{itemize} 
However, section 3 of \cite{MY} then implies that 
${\cal A}_1$ must be a disk.  This contradiction implies that $M(t) \cap 
C(B_{2\epsilon,P_1} (p_i))$ is 
actually a disk.  The Gauss-Bonnet theorem then implies that the total 
absolute curvature satisfies 
\[ \int_{M(t) \cap C(B_{2\epsilon,P_1} (p_i))} |K| dA < 2\pi \; . \]  
This shows step 6.  Dividing our original choice for $\epsilon$ by $2$, 
step 6 implies the fourth item of the theorem.

Thus $M(t)$ with $t$ close to $1$ is a graph away from crossing points 
over components of $P_1 \setminus 
\{ \alpha(1) \cup \beta \}$ assigned a plus sign.  And 
$M(t)$ is a disk in small vertical neighborhoods of each 
crossing point.  One can then easily check 
that the Euler characteristic of 
$M(t)$ is $s-n$, and therefore the genus of $M(t)$ is $\frac{n-s}{2}$.
\end{proof}


Just as the Meeks and White result was extended to the case of a wedge by 
Theorem~2.1 in the previous section, 
likewise Theorem~3.1 can be extended 
to the case of a wedge.  This is done in Theorem~3.2, and the proof 
is essentially the same as 
for Theorem~3.1.  Let $\beta$ be a $C^2$ Jordan curve in the interior 
of the half-plane $\omega(0)$, and let $\alpha$ be a $C^2$ Jordan curve in 
the interior of the half-plane $\omega(\frac{\pi}{2})$.  Let 
$\alpha(t) = R_{\frac{-\pi t}{2}}(\alpha)$ be the rotation 
about the $x_1$-axis of $\alpha$ into the half-plane 
$\omega(\frac{\pi}{2} (1-t) )$.  Suppose $\alpha(1)$ and $\beta$ 
intersect transversely at a finite number of points.  Call these $n$ points 
the {\em crossing points} of $\alpha$ and $\beta$.  
Define $s$ and $\cal W$ just as before.  

\begin{theorem}
Suppose 
that $\cal W$ is the unique least-area set in $\omega(0)$ with boundary equal 
to $\alpha(1) \cup \beta$.  Let $\{p_1,...,p_n\} = \alpha(1) \cap \beta$ be 
the crossing points.  Then there exists $\epsilon>0$ and $t_0 \in (0,1)$ so 
that for all $t \in [t_0,1)$, there is an orientable minimal 
surface $M(t) \subset 
\bfR^3$ with boundary $\alpha(t) \cup \beta$ having the following properties:  
\begin{list}%
{\arabic{num})}{\usecounter{num}\setlength{\rightmargin}{\leftmargin}}
\item $M(t)$ has least area amongst all orientable surfaces with the same boundary.  
\item $M(t)$ is embedded.  
\item $M(t) \setminus C(\cup_{i=1}^n B_{\epsilon,\omega(0)} (p_i))$ is 
a graph over the set ${\cal W} \subset \omega(0)$.  
\item For each $i$, $M(t) \cap C(B_{\epsilon,\omega(0)} (p_i))$ is 
homeomorphic to a disk with total absolute 
curvature less than $2 \pi$.  
\item $M(t)$ has genus $\frac{n-s}{2}$.  
\end{list}
\end{theorem}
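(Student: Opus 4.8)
The plan is to run the six-step proof of Theorem~3.1 essentially verbatim, replacing the vertical squeezing between the parallel planes $P_t$ and $P_1$ by an angular squeezing inside the wedge $W_{0,\theta_t}$, where $\theta_t=\frac{\pi}{2}(1-t)$. First I would record the existence of $M(t)$ with the required background properties exactly as before: \cite{HS} produces an embedded, orientable, finite-genus, bounded-curvature least-area surface with boundary $\alpha(t)\cup\beta$, and \cite{O2} together with \cite{N2} rule out interior and boundary branch points (the boundary is extremal). Since $\theta_t\leq\pi/2<\pi$, the wedge $W_{0,\theta_t}$ is convex and contains $\alpha(t)\cup\beta$, so $M(t)\subseteq W_{0,\theta_t}$ by the convex-hull property of minimal surfaces.

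The crux of the whole argument, and the step I expect to require the most care, is the observation that a thin wedge traps $M(t)$ inside a thin \emph{horizontal slab}, so that Lemma~3.1 applies with no change. Since $\beta$ is fixed and $\alpha(t)=R_{-\pi t/2}(\alpha)$ is merely a rotation of the fixed curve $\alpha$, the boundary stays in a fixed bounded ball for all $t$, so there is a constant $\rho_1$, uniform on $[t_0,1)$, with $x_2\leq\rho_1$ on $M(t)$. The half-plane $\omega(0)$ lies in $\{x_3=0\}$ and $\omega(\theta_t)$ lies in $\{x_3=x_2\tan\theta_t\}$, so every point of $W_{0,\theta_t}$ with $x_2\leq\rho_1$ satisfies $0\leq x_3\leq\rho_1\tan\theta_t$. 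Hence $M(t)$ lies between the two parallel horizontal planes $\{x_3=0\}$ and $\{x_3=\rho_1\tan\theta_t\}$, whose separation $H_t:=\rho_1\tan\theta_t$ tends to $0$ as $t\to1$. Thus Lemma~3.1 may be invoked throughout with $H_t$ playing the role of $1-t$, the near-vertical conclusion is measured against the same $\vec{e}_3$, and the vertical projection onto $\{x_3=0\}\supseteq\omega(0)$ supplies the graphs. The only genuine labor here is verifying the uniform radius bound $\rho_1$ and checking that no point of $M(t)$ reaches the $x_1$-axis in a way that degenerates the slab estimate; given the convex-hull confinement this is routine.

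With this reduction secured, Steps~1--6 transcribe directly. In Step~1 the curvature estimate of \cite{Sc2} combined with Lemma~3.1 (with $H_t$ in place of $1-t$) forces the normal to be nearly vertical off an $\epsilon$-neighborhood of $\alpha(1)\cup\beta$; Step~2's area comparison survives because the two competing graphs are now separated by at most $H_t\to0$; and Step~3's comparison against the twisted model surface $\breve{M}(t)$ uses the hypothesis that ${\cal W}$ is the unique least-area set in $\omega(0)$, exactly as stated. The one real modification is in Step~4: the thin vertical catenoid barriers are still placed in the cylinders over the minus-regions and pushed to first contact, but the midplane reflection $P_{(1+t)/2}$ that produces the barrier for the $\beta$-side is replaced by the reflection interchanging the two faces of the wedge, namely reflection across $\{x_3=x_2\tan(\theta_t/2)\}$; since $\theta_t\to0$, both the tilted curve $\alpha(t)$ and the tilted reflected catenoid are asymptotically horizontal, so the first-contact angle $\theta_0(t)$ still tends to $0$. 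Step~5 then reruns the thin-cylinder application of Lemma~3.1 near the boundary curves, and Step~6 is purely local near each crossing point, so the Gauss--Bonnet and \cite{MY} arguments yielding a disk of total absolute curvature less than $2\pi$ are unchanged. Finally the count $\chi(M(t))=s-n$, hence genus $\frac{n-s}{2}$, is identical. In short, once the wedge-to-slab reduction of the second paragraph is in hand, the argument is word-for-word that of Theorem~3.1.
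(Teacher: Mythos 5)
Your proposal is correct and is exactly the route the paper intends: the paper's entire treatment of Theorem~3.2 is the remark that its proof is ``essentially the same'' as that of Theorem~3.1, and your wedge-to-slab reduction (convex-hull confinement in $W_{0,\theta_t}$ plus the uniform bound $\rho_1$ on distance from the $x_1$-axis, giving $0 \leq x_3 \leq \rho_1 \tan\theta_t$ so that Lemma~3.1 applies with $\rho_1\tan\theta_t$ in place of $1-t$) is precisely what makes that remark rigorous. The step-by-step adjustments you list, including replacing the midplane reflection $P_{(1+t)/2}$ by the reflection across the bisecting half-plane of the wedge in Step~4, are the correct modifications, so your argument matches the paper's intended proof.
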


\section{Non-Existence Results} 

	The results we prove in this section are about existence of 
compact minimal and constant mean curvature surfaces with a given pair 
of boundary curves $C_1, C_2$ in parallel horizontal 
planes.  When $C_1$ and $C_2$ are convex and the vertical projection 
of $C_1$ into the plane containing $C_2$ does not intersect $C_2$, 
it is well known that such a minimal surface does not exist.  
For completeness we give the proof here.  

\begin{lemma}
	If $C_1$ and $C_2$ are convex 
curves in parallel planes and if the perpendicular projection 
of $C_1$ into the plane containing $C_2$ is disjoint from $C_2$, then there 
does not exist a compact connected minimal surface with boundary 
$C_1 \cup C_2$.
\end{lemma}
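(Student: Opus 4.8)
The plan is to reduce the statement to the maximum principle, via the convex hull property together with a separating plane, and then to rule out the remaining ``bridging'' configuration. First I would normalize coordinates so that $C_2$ lies in $P_0 = \{x_3 = 0\}$ and $C_1$ lies in $P_1 = \{x_3 = h\}$. Since the orthogonal projection $\Lambda(C_1)$ into $P_0$ and the curve $C_2$ are disjoint convex curves, the two compact convex regions they bound are disjoint, so by the separating line theorem there is a line in $P_0$ strictly separating them. Lifting this line vertically produces a vertical plane $\Pi = \{x_1 = k\}$ that strictly separates the two boundary curves in $\bfR^3$: one may arrange $C_1 \subseteq \{x_1 > k\}$ and $C_2 \subseteq \{x_1 < k\}$, so that $\partial M \cap \Pi = \emptyset$.

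Next I would invoke the convex hull property. Because the coordinate functions are harmonic on a minimal surface, sweeping by planes and applying the maximum principle shows that any compact minimal surface $M$ with $\partial M = C_1\cup C_2$ satisfies $M \subseteq \mathrm{conv}(C_1\cup C_2)$ and $0\le x_3\le h$ on $M$. Now suppose, for contradiction, that such a connected $M$ exists. Since $M$ is connected and its boundary meets both open half-spaces determined by $\Pi$, the set $\Gamma := M\cap\Pi$ is nonempty; as $\partial M$ avoids $\Pi$, the set $\Gamma$ is a disjoint union of closed curves in the interior of $M$, lying in the plane $\Pi$ at heights strictly between $0$ and $h$. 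A first useful observation is that no connected component $U$ of $M\setminus\Pi$ can be disjoint from $\partial M$: otherwise $U$ would be a minimal surface with $\partial U\subseteq\Gamma\subseteq\Pi$, whence $U\subseteq\mathrm{conv}(\partial U)\subseteq\Pi$ and, by unique continuation, $M\subseteq\Pi$ --- impossible, since $M$ meets $P_0$ and $P_1$ in nondegenerate convex curves.

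The crux --- and the step I expect to be the main obstacle --- is to convert the existence of this interior ``neck'' $\Gamma$ crossing $\Pi$ into a contradiction, since crossing a plane transversally is by itself consistent with the maximum principle (no one-sided interior tangency occurs) and with connectivity, so the obstruction must be global rather than local. My plan here is to use the balancing (flux) formula: for a minimal surface the conormal integral $\Phi = \int_{L}\nu\,ds$ is the same for every level cycle $L = M\cap\{x_3=t\}$, $t\in(0,h)$, and in the limits $t\to 0$ and $t\to h$ it computes the conormal integrals along $C_2$ and $C_1$ respectively. I would then combine this with the strict convexity of $C_1$ and $C_2$ and the containment $M\subseteq\mathrm{conv}(C_1\cup C_2)$ --- which forces the conormal along each boundary curve to point ``toward the gap'' --- to produce an incompatibility in the horizontal component $\langle\Phi,\vec{e}_1\rangle$. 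The delicate point is that this horizontal flux need not carry an obvious sign for a general (possibly unstable) surface, so the argument must exploit the convex geometry of the separated boundary carefully; an alternative I would keep in reserve, closer to the methods already used in the paper, is to place a minimal catenoidal barrier inside the gap slab $\{|x_1-k|<\delta\}$, as in Step 4 of the proof of Theorem~3.1, and slide it to force a forbidden interior contact with the bridging part of $M$. The constant-mean-curvature analogue would then follow by replacing planes with spheres of the appropriate radius and the flux relation with its mean-curvature-weighted version.
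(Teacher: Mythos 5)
Your setup (the separating vertical plane $\Pi$, the convex hull property, and the nonempty ``neck'' $\Gamma = M \cap \Pi$) matches the beginning of the paper's argument, but the crux --- which you yourself flag as the main obstacle --- is genuinely missing, and neither of the two routes you sketch will close it. The flux idea cannot work: cutting $M$ along $\Pi$ and applying the divergence theorem to the harmonic function $x_1$ on $M^+ = M \cap \{x_1 \geq k\}$ gives $\int_{C_1} \partial x_1/\partial \nu \, ds = -\int_{\Gamma} \partial x_1/\partial \nu \, ds \geq 0$ (with $\nu$ the outward conormal of $M^+$), and the analogous identity on $M^-$ gives $\int_{C_2} \partial x_1/\partial \nu \, ds \leq 0$; these two statements are compatible with the total flux $\int_{\partial M} \partial x_1/\partial \nu \, ds = 0$, so the balancing is automatically consistent. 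The identities merely say that on average $M$ leaves $C_1$ heading toward the gap and leaves $C_2$ heading toward the gap --- which is exactly what a bridging surface would do --- and convexity of the boundary curves gives no pointwise sign on $\langle \nu, \vec{e}_1 \rangle$ along them, so no contradiction can be extracted. Your reserve option fails too: the portion of $M$ inside the gap slab $\{|x_1 - k| < \delta\}$ is a minimal surface with boundary on the two vertical walls and is not confined to any thin region (a flat horizontal strip crosses such a slab minimally), so unlike Step 4 of Theorem~3.1, where the surface is squeezed between two nearby horizontal planes, there is no room for a catenoidal barrier to force a forbidden tangency. The obstruction here is global, and no local barrier at the gap can see it.

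The missing idea --- which is the entirety of the paper's proof --- is to compare $M$ not with a barrier but with a congruent copy of itself. Let $\mbox{Ref}(M)$ be the reflection of $M$ through $\Pi$, translate $\mbox{Ref}(M)$ in the $x_2$-direction (parallel to $\Pi$ and to both boundary planes) until it is disjoint from $M$, then slide it back to the first moment of contact. A contact point cannot lie on any boundary curve: for instance, a contact point $p \in C_1 \subseteq \{x_3 = h\} \cap \{x_1 > k\}$ would lie on the slid copy of $\mbox{Ref}(M)$, which is contained in $\{0 \leq x_3 \leq h\}$; then either $p$ lies on the slid $\mbox{Ref}(C_1) \subseteq \{x_1 < k\}$ (impossible, since sliding in $x_2$ preserves $x_1$), or $p$ is an interior maximum of $x_3$ on that connected surface, forcing it into the plane $\{x_3 = h\}$ and contradicting that part of its boundary lies in $\{x_3 = 0\}$. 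Hence first contact occurs at interior points of both surfaces, where they are tangent with one locally on one side of the other; the interior maximum principle then forces $M$ to coincide with the slid copy of $\mbox{Ref}(M)$, which is absurd because their boundaries differ. Your plane $\Pi$ is exactly the right object --- but the proof closes by reflecting the surface across it, not by pushing flux or barriers at it.
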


\begin{proof}
	We may assume $C_1$ and $C_2$ are contained in planes parallel to the 
$x_1x_2$-plane, and by hypothesis we may assume 
$C_1$ and $C_2$ lie on opposite sides of and are disjoint 
from the plane $\{x_1 = 0\}$.  Suppose there exists a compact connected 
minimal surface $M$ with boundary $C_1 \cup C_2$, then let 
$\mbox{Ref}(M)$ be the 
reflection of $M$ through the plane $\{x_1 = 0\}$.  Translate $\mbox{Ref}(M)$ 
in the 
direction of the $x_2$-axis until it is disjoint from $M$, then translate it 
back until the first moment when $M$ and $\mbox{Ref}(M)$ 
intersect.  This point of 
intersection must be in the interiors of both $M$ and $\mbox{Ref}(M)$, 
and $M \neq \mbox{Ref}(M)$, contradicting the maximum principle.  
\end{proof}

We use the same kind of technique to prove the next lemma.  A {\em slab} in 
$\bfR^3$ is a region lying between two parallel planes.  A slab is 
called {\em vertical} if its two boundary 
planes are perpendicular to the $x_1x_2$-plane.  

\begin{lemma}
	Suppose $C_1$ and $C_2$ are two planar curves in horizontal planes, 
and these two planes are of distance $h_1$ apart.  If 
$C_1 \cup C_2$ is contained in a vertical 
slab of width $h_2 < h_1$, then there does not exist a compact 
connected minimal surface with boundary $C_1 \cup C_2$.
\end{lemma}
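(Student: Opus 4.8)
The plan is to use the same reflection-plus-sliding scheme as in Lemma~4.1, but to reflect across a \emph{tilted} plane whose choice encodes the hypothesis $h_2 < h_1$. First I would normalize coordinates so that the vertical slab is $\{0 \le x_1 \le h_2\}$ and the two horizontal planes are $\{x_3 = 0\}$ and $\{x_3 = h_1\}$, with $C_1 \subseteq \{x_3=0\}$ and $C_2 \subseteq \{x_3 = h_1\}$. Assuming for contradiction that a compact connected minimal surface $M$ with $\partial M = C_1 \cup C_2$ exists, the maximum principle (convex hull property) places $M$ in the box $\{0 \le x_1 \le h_2\} \cap \{0 \le x_3 \le h_1\}$.

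Next I would fix a constant $c$ with $0 < c < h_1 - h_2$ (such a $c$ exists precisely because $h_2 < h_1$) and let $\mbox{Ref}$ be reflection of $\bfR^3$ through the plane $P = \{x_1 - x_3 + c = 0\}$. Since $P$ contains the $x_2$-direction, $\mbox{Ref}$ fixes $x_2$ and acts on $(x_1,x_3)$ as the planar reflection $(x_1, x_3) \mapsto (x_3 - c,\, x_1 + c)$ across the line $\ell : x_3 = x_1 + c$. Then $\mbox{Ref}(M)$ is again a compact connected minimal surface, and because its $x_3$-coordinate equals $x_1 + c$ evaluated on $M$, it satisfies $\mbox{Ref}(M) \subseteq \{c \le x_3 \le h_2 + c\}$, a slab of heights strictly interior to $(0,h_1)$ (here $h_2 + c < h_1$).

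The heart of the argument is a two–dimensional bookkeeping in the $(x_1,x_3)$-plane. The bottom and top edges $B = [0,h_2]\times\{0\}$ and $T = [0,h_2]\times\{h_1\}$ of the box lie on opposite sides of $\ell$ while $\ell$ meets the box, so $\mbox{Ref}$ sends $B$ to $x_1 = -c < 0$ and $T$ to $x_1 = h_1 - c > h_2$, both \emph{outside} the box. Because the projections of $C_1$ and $C_2$ lie in $B$ and $T$ respectively and $M$ is connected, the projection of $M$ must cross $\ell$; at such a fixed point of the reflection the projections of $M$ and $\mbox{Ref}(M)$ coincide. I would then translate $\mbox{Ref}(M)$ in the $x_2$-direction: for a large translation it is disjoint from $M$ (both are compact), and sliding it back the overlapping projections force a first point of contact. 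Since $\mbox{Ref}(\partial M)$ projects outside the box while $M$ projects inside it, and symmetrically $\partial M$ projects off of $\mbox{Ref}(M)$, no boundary point of either surface ever meets the other surface during the slide. Hence the first contact is interior to both, and the interior maximum principle (with unique continuation) forces $M$ to coincide with the translated $\mbox{Ref}(M)$. As the $x_2$-translation preserves $x_3$-heights, this contradicts the fact that $M$ attains the heights $x_3 = 0$ and $x_3 = h_1$ whereas $\mbox{Ref}(M)$ lies in $\{c \le x_3 \le h_2 + c\}$.

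I expect the main obstacle to be exactly the verification that the first contact is an interior tangency rather than a boundary contact; this is where the thinness $h_2 < h_1$ is essential, since it is what creates room for the offset $c > 0$ that pushes the reflected boundary curves clear of the box in the $(x_1,x_3)$-projection. The remaining points — existence of a first contact and the concluding application of the maximum principle — are routine, as in Lemma~4.1.
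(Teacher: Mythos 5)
Your proposal is correct, and it follows the same Alexandrov-style sliding scheme as the paper's proof of this lemma (rigid motion, slide to a last moment of contact, interior maximum principle), but with a genuinely different rigid motion, so the two are worth comparing. The paper normalizes so that the $x_1$-axis is equidistant from the two horizontal planes and from the two walls of the vertical slab, replaces your tilted reflection by the quarter-turn $R_{\frac{\pi}{2}}$ about the $x_1$-axis, and slides along the $x_1$-axis: the rotated copy lies in $\{|x_3| \le h_2/2\}$, strictly between the horizontal planes, while its boundary lies in $\{|x_2| = h_1/2\}$, strictly outside the slab --- exactly the decoupling of the two boundaries that you achieve with the $45^\circ$ reflection and the offset $0 < c < h_1 - h_2$. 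The rotation is more economical (no auxiliary parameter $c$, one symmetric normalization), but your version buys something the paper leaves implicit: since the fixed-point set of $\mbox{Ref}$ is the tilted plane itself, the intermediate value theorem applied to $x_3 - x_1 - c$ on the connected surface $M$ produces an honest point of $M \cap \mbox{Ref}(M)$, so the set of translation parameters giving nonempty intersection visibly contains $0$. In the paper's rotated version, the existence of \emph{some} translate of $R_{\frac{\pi}{2}}(M)$ meeting $M$ needs a small topological argument (the connected projections to the $(x_2,x_3)$-plane cross, by a crossing lemma for continua in a square, and one then matches $x_1$-coordinates by translation), which the paper does not spell out. One caution about your wording: ``the overlapping projections force a first point of contact'' is not by itself a justification, since overlapping $(x_1,x_3)$-projections never force a three-dimensional intersection; what you actually need, and what your fixed-point observation gives, is that $q = \mbox{Ref}(q)$ lies on both $M$ and $\mbox{Ref}(M)$, so the compact set of translation parameters with nonempty intersection contains $0$ and you may slide back from infinity to its maximum, where the contact is interior by your two separation statements.
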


\begin{proof}
We may assume that the boundary planes of the vertical slab are parallel to the 
$x_1$-axis, 
and we may assume the $x_1$-axis is equidistant from the two planes 
containing $C_1$ and $C_2$ and also from the boundary planes of the vertical slab.  
Suppose there exists a compact connected minimal surface $M$ with boundary 
$C_1 \cup C_2$.  Let $R_{\frac{\pi}{2}}(M)$ be a 
rotation of $M$ by $\frac{\pi}{2}$ radians about 
the $x_1$-axis.  Translate $R_{\frac{\pi}{2}}(M)$ 
in the direction of the $x_1$-axis until it 
is disjoint from $M$, then translate it back until the first moment when 
$M$ and $R_{\frac{\pi}{2}}(M)$ intersect.  Since $h_2 < h_1$, this point of 
intersection must be in the interiors of both $M$ and $R_{\frac{\pi}{2}}(M)$, 
and $M \neq R_{\frac{\pi}{2}}(M)$, contradicting the maximum principle.  
\end{proof}

\begin{corollary}
	If $C_1$ and $C_2$ are planar curves 
with diameters $d_1$ and $d_2$ in parallel planes, and these two 
planes are of distance $h$ apart, and these two curves bound a compact 
connected minimal surface $M$, then $h \leq $ \mbox{max}$\{d_1, d_2\}$.  
\end{corollary}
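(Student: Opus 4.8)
The plan is to prove the contrapositive and feed it into Lemma~4.2. After rotating so that $C_1$ and $C_2$ lie in horizontal planes a distance $h$ apart, I would assume for contradiction that $h > \max\{d_1,d_2\}$ and then exhibit a vertical slab of width strictly less than $h$ that contains $C_1 \cup C_2$. Lemma~4.2 (with $h_1 = h$) would then forbid any compact connected minimal surface spanning $C_1 \cup C_2$, contradicting the assumed existence of $M$; hence $h \le \max\{d_1,d_2\}$. Note that Lemma~4.2, rather than Lemma~4.1, is the right tool precisely because no convexity of the curves is available, only control on their diameters.

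The heart of the matter is choosing the slab's direction. Let $\pi$ be vertical projection to the $x_1x_2$-plane. For each horizontal unit vector $\vec{v}$, consider the closed interval $[a_i(\vec{v}),b_i(\vec{v})]$ onto which $C_i$ projects under $p \mapsto \langle \vec{v},\pi(p)\rangle$. Since $C_i$ is a planar horizontal curve, $\pi$ restricts to an isometry on it, so the length $b_i(\vec{v}) - a_i(\vec{v})$ of this interval is at most the diameter $d_i$. The width of $C_1 \cup C_2$ in the direction $\vec{v}$ is $\max\{b_1,b_2\} - \min\{a_1,a_2\}$, and I want a direction in which this quantity is at most $\max\{d_1,d_2\}$, which is what produces the thin vertical slab $\{\,s \le \langle \vec{v},x\rangle \le s + w\,\}$.

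The main obstacle, and essentially the only nonroutine step, is finding a direction in which the two projection intervals are concentric. Set $m_i(\vec{v}) = \frac{1}{2}(a_i(\vec{v}) + b_i(\vec{v}))$ for the center of the $i$-th interval and put $g(\vec{v}) = m_1(\vec{v}) - m_2(\vec{v})$. Each $a_i,b_i$ is a min/max of a continuous function over a compact curve, hence continuous in $\vec{v}$, so $g$ is continuous on the circle of directions. Moreover $a_i(-\vec{v}) = -b_i(\vec{v})$ and $b_i(-\vec{v}) = -a_i(\vec{v})$, so $m_i(-\vec{v}) = -m_i(\vec{v})$ and therefore $g(-\vec{v}) = -g(\vec{v})$. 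By the intermediate value theorem applied along an arc from $\vec{v}_1$ to $-\vec{v}_1$, the odd continuous function $g$ has a zero $\vec{v}_0$, giving $m_1(\vec{v}_0) = m_2(\vec{v}_0) =: m$.

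For this direction $\vec{v}_0$ both projection intervals are centered at $m$ with lengths $\ell_1 \le d_1$ and $\ell_2 \le d_2$, so one is nested in the other and their union is the longer interval, of length $\max\{\ell_1,\ell_2\} \le \max\{d_1,d_2\}$. Thus $C_1 \cup C_2$ is contained in the vertical slab $\{\,m - \frac{1}{2}\max\{d_1,d_2\} \le \langle \vec{v}_0,x\rangle \le m + \frac{1}{2}\max\{d_1,d_2\}\,\}$, whose bounding planes are perpendicular to the horizontal vector $\vec{v}_0$ and hence vertical, and whose width $\max\{d_1,d_2\}$ is strictly less than $h$ by the contradiction hypothesis. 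Applying Lemma~4.2 with $h_1 = h$ and $h_2 = \max\{d_1,d_2\} < h_1$ rules out any compact connected minimal surface with boundary $C_1 \cup C_2$, contradicting the existence of $M$ and completing the proof.
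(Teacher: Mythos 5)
Your proof is correct, and it follows the same overall strategy as the paper: reduce to Lemma~4.2 by trapping $C_1 \cup C_2$ in a vertical slab of width at most $\max\{d_1,d_2\}$, which is strictly less than $h$ under the contradiction hypothesis. The difference is in how the slab is produced. The paper takes a vertical plane $P_1$ tangent to both $C_1$ and $C_2$ with $C_1 \cup C_2$ on one side; since each $C_i$ touches $P_1$, every point of $C_i$ lies within distance $d_i$ of $P_1$, and the slab appears in one line. You instead find, via the intermediate value theorem applied to the odd continuous function $g(\vec{v}) = m_1(\vec{v}) - m_2(\vec{v})$, a horizontal direction in which the two projection intervals are concentric, so that their union has length at most $\max\{d_1,d_2\}$. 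Your route is longer but actually more robust: a vertical plane tangent to both curves with both on one side need not exist --- for instance, if the vertical projection of $C_1$ lies in the interior of the convex hull of the projection of $C_2$ (think of two concentric circles of very different radii), then every supporting line of the small curve cuts through the large one --- so the paper's construction implicitly needs a separate (easy) argument in such nested configurations, whereas your concentric-interval argument covers all cases uniformly. Both proofs then invoke Lemma~4.2 identically, and you are right that Lemma~4.2, not Lemma~4.1, is the correct tool since no convexity is assumed.
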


\begin{proof}
	We may assume $C_1$ and $C_2$ lie in horizontal planes of distance 
$h$ apart. Let $P_1$ be a vertical plane that is tangent to both $C_1$ and 
$C_2$, so that $C_1 \cup C_2$ lies entirely to one side of $P_1$.  There 
exists another plane $P_2$ parallel to $P_1$ such that $C_1 \cup C_2$ lies 
entirely within the vertical slab bounded by $P_1$ and $P_2$, and the 
distance between $P_1$ and $P_2$ is at most $\max\{d_1,d_2\}$.  
The result follows from Lemma~4.2.  
\end{proof}

	As stated in the introduction, the 
above corollary is a generalization of a result by 
Nitsche \cite{N1}, where it is assumed that $M$ is an annulus and 
gives the weaker conclusion $h \leq \frac{3}{2}\max\{d_1, d_2\}$.  Even our 
result here perhaps does not give the strongest possible result for 
an upper bound on $h$.  For example, if $M$ is an annular 
subregion of a catenoid with a 
vertical axis in $\bfR^3$, and the boundary $\partial M$ is 
two circles (with radii $r_1$ and $r_2$, respectively) in horizontal planes, 
then the two horizontal planes containing $\partial M$ are of distance less 
than $\frac{2}{3}(r_1 + r_2)$ apart.  
One can easily see this by elementary 
considerations of the generating curve $y = \cosh(x)$ of the catenoid.  
Since $\frac{2}{3}(r_1 + r_2) \leq \frac{2}{3} \max\{d_1=2r_1,d_2=2r_2\}$, 
we know that Corollary~4.1 is not the strongest possible result in the 
case of this catenoid $M$.  

	Lemma 4.2 cannot be extended to surfaces of constant non-zero 
mean curvature, as the round cylinder shows, but something can be said about 
the possible values of the mean curvature in the case of nonminimal 
constant mean curvature surfaces:  

\begin{proposition}
	Let $C = \{x_1^2 + x_2^2 \leq r^2, 0 \leq x_3 \leq d\} \subset 
\bfR^3$, with $r < \frac{d}{2}$.  If $\Sigma \subset C$ 
is an embedded constant mean curvature 
surface with mean curvature $H$ (with respect to the inward pointing 
normal) and boundary in the planes $P_0 = 
\{x_3 = 0\}$ and $P_d = \{x_3 = d\}$, then 
\[ H > \frac{d^2 - 12r^2}{2r(d^2 - 4r^2)} - 
\frac{32r^2d^2}{(d^2 - 4r^2)^3} \; . \]  
Thus when $d$ is large relative to $r$, the mean curvature $H$ of $\Sigma$ is 
bounded away from zero.  Furthermore, the limiting value for this lower bound 
as $d \rightarrow \infty$ is equal to the mean curvature of a 
cylinder of radius $r$.  
\end{proposition}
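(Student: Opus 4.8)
Throughout, normalize the mean curvature as the average of the principal curvatures, so that a sphere of radius $\rho$ has $H=1/\rho$ and the round cylinder of radius $r$ has $H=1/(2r)$; this is the convention under which the asserted lower bound tends to $1/(2r)$ as $d\to\infty$, matching the last sentence of the statement. Let $\Omega$ be the region cut out by $\Sigma$ inside $C$, let $\vec{N}$ be the inward normal as in the statement, and let $\phi=x_1^2+x_2^2$ be the squared distance to the axis of $C$. Since $\Sigma$ is compact, $\phi|_\Sigma$ attains its maximum at some point $p$, and the plan is to split on whether $p$ is an interior point or lies on $\partial\Sigma\subset P_0\cup P_d$.

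First I would dispose of the (favorable) case in which $\phi|_\Sigma$ has an \emph{interior} local maximum $p$. After a rotation about the axis we may take $p=(\rho,0,z)$ with $\rho\le r$ and outward radial direction $e_1$; since $p$ is a critical point of $\phi$ the radial vector is normal, so $\vec{N}(p)=-e_1$ and $T_p\Sigma=\mathrm{span}(e_2,e_3)$. Using the splitting $\mathrm{Hess}_\Sigma\phi(v,v)=\mathrm{Hess}_{\bfR^3}\phi(v,v)+\langle\nabla\phi,\vec{N}\rangle\,\mathrm{II}(v,v)$ together with $\nabla\phi(p)=2\rho\,e_1$ and $\mathrm{Hess}_{\bfR^3}\phi=2\,\mathrm{diag}(1,1,0)$, the two maximality inequalities $\mathrm{Hess}_\Sigma\phi(e_2,e_2)\le 0$ and $\mathrm{Hess}_\Sigma\phi(e_3,e_3)\le 0$ give $\mathrm{II}(e_2,e_2)\ge 1/\rho\ge 1/r$ and $\mathrm{II}(e_3,e_3)\ge 0$. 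Since $2H=\mathrm{II}(e_2,e_2)+\mathrm{II}(e_3,e_3)$, this yields $H\ge 1/(2r)$, which already exceeds the asserted bound (the latter being strictly less than $1/(2r)$). The same conclusion holds whenever $\phi|_\Sigma$ possesses \emph{any} interior local maximum, so the only remaining case is that $\phi|_\Sigma$ increases monotonically toward the boundary planes.

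In that substantive case $\Sigma$ is pinched in the interior and flares out only near $P_0$ and $P_d$, and the narrowest interior ``neck'' is the enemy: there the horizontal normal curvature $\mathrm{II}(e_2,e_2)$ is large and positive, but the vertical normal curvature $\kappa_v:=\mathrm{II}(e_3,e_3)$ is negative, so that $2H=\mathrm{II}(e_2,e_2)+\kappa_v$ can be small. The point is that $\kappa_v$ cannot be too negative, because $\Sigma$ is confined to the slab $\{0\le x_3\le 1\cdot d\}$: a vertical slice curve $\gamma=\Sigma\cap\{x_2=0\}$ bending outward with curvature $|\kappa_v|$ at the neck must return inside radius $r$ using vertical room at most $d$. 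Quantifying this, in the manner of Lemma~3.1 (integrating the bound on $\gamma''$ against the available height), or equivalently by sliding a barrier sphere tangent to the cylinder wall and to the planes and invoking the maximum principle exactly as in Lemmas~4.1--4.2, should produce a bound of the form $\kappa_v\ge -8r/(d^2-4r^2)-\cdots$; substituting into $2H=\mathrm{II}(e_2,e_2)+\kappa_v$ with $\mathrm{II}(e_2,e_2)\ge 1/r$ then yields the asserted inequality, the characteristic length $\tfrac12\sqrt{d^2-4r^2}$ being precisely the half-height at which the radius-$d/2$ sphere centered at mid-height first meets the wall of $C$.

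The hard part is making this neck estimate \emph{sharp}, so that both correction terms $4r/(d^2-4r^2)$ and $32r^2d^2/(d^2-4r^2)^3$ emerge from one barrier computation. I expect the leading correction from a first-order sagitta estimate (a circular arc of curvature $|\kappa_v|$ spanning the available height loses radius of order $|\kappa_v|\,(d^2-4r^2)$), and the second, higher-order correction from retaining the next term in the expansion of the barrier sphere's profile near the point of tangency, which is what produces the cube $(d^2-4r^2)^3$. The remaining technical obstacle is to exhibit a genuine interior point at which $\mathrm{II}(e_2,e_2)\ge 1/r$ and the slab-barrier bound on $\kappa_v$ hold \emph{simultaneously}; I would take it to be the lowest point of first interior contact of the sliding barrier sphere, which by construction lies strictly between $P_0$ and $P_d$ and at which the maximum principle forces $\Sigma$ to be at least as curved as the barrier.
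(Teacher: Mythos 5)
Your Case 1 is correct: at an interior local maximum of $\phi = x_1^2+x_2^2$ the Hessian comparison does give $\mathrm{II}(e_2,e_2)\ge 1/\rho$ and $\mathrm{II}(e_3,e_3)\ge 0$, hence $H\ge 1/(2r)$, which is stronger than the claimed bound (modulo the tacit assumption that the inward normal at that point is $-e_1$ rather than $+e_1$). But the proposition lives in your Case 2, and there what you have written is a program, not a proof, and both of its pillars are unsupported. First, the inequality $\mathrm{II}(e_2,e_2)\ge 1/r$ at a ``neck'' point has no justification: it was derived from the second-derivative test at an interior maximum of $\phi$, which is exactly what Case 2 excludes, and for a general embedded (not rotationally symmetric) CMC surface nothing forces the horizontal normal curvature at a narrowest point to be at least $1/r$. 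Second, the tool you invoke to bound $\kappa_v$ cannot do so: the maximum principle compares the \emph{mean} curvatures of two tangent surfaces lying locally on one side of one another; it yields no bound on an individual second-fundamental-form entry such as $\mathrm{II}(e_3,e_3)$. Nor can the radius-$d/2$ sphere serve as a direct barrier, since its mean curvature is $2/d$, which tends to $0$ and cannot produce a bound near $1/(2r)$. Consequently the two correction terms --- the entire quantitative content of the proposition --- are never derived; ``should produce'' and ``I expect'' mark genuine gaps, as does your admitted inability to exhibit one point where both estimates hold simultaneously.

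The paper's proof avoids the case analysis and the two-term curvature bookkeeping altogether. It takes the one-parameter family of surfaces of rotation about the $x_3$-axis inside $C$ whose boundaries are the two circles $\{x_1^2+x_2^2=r^2\}\cap P_0$ and $\{x_1^2+x_2^2=r^2\}\cap P_d$ and whose generating curves are arcs of circles, starting at the cylinder (straight profile) and bulging inward. A direct computation --- the single place the constants come from --- shows that every member of this family has mean curvature, with respect to the inward normal, strictly greater than $\frac{d^2-12r^2}{2r(d^2-4r^2)} - \frac{32r^2d^2}{(d^2-4r^2)^3}$. One then shrinks the family from the cylinder until it makes first contact with $\Sigma$; at the contact point the maximum principle forces $H$ to be at least the mean curvature of the comparison surface there, and the stated inequality follows. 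This sweep needs no dichotomy on where $\phi|_\Sigma$ is maximized, and it resolves your simultaneity obstacle automatically, because the comparison surface carries both curvature contributions in one computable package. If you want to rescue your outline, that is the fix: replace the sphere and the single-curvature estimate by a sweeping family of comparison surfaces whose full mean curvature you can compute.
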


\begin{proof}
	Consider the embedded 
annular surfaces in $C$ which are surfaces of rotation about the 
$x_3$-axis with boundary curves 
$\{x_1^2 + x_2^2 = r^2, x_3 = 0\}$ and
$\{x_1^2 + x_2^2 = r^2, x_3 = d\}$ and with generating curves that are arcs 
of a circle.  Among this 1-parameter family of surfaces the mean 
curvature (with respect to the inward pointing normal) is always larger than 
$\frac{d^2 - 12r^2}{2r(d^2 - 4r^2)} - 
\frac{32r^2d^2}{(d^2 - 4r^2)^3}$.  Shrinking this family of surfaces 
from a cylinder of radius $r$ to the surface which makes first contact with 
$\Sigma$, we may apply the maximum principle \cite{Sc1} to conclude the result.  
\end{proof}

\bibliographystyle{plain}

\begin{thebibliography}{KoKuMeSo}

\bibitem[C]{C} J. Choe.  Index, Vision Number, and Stability of 
Complete Minimal Surfaces.  {\em Arch. Rat. Mech. Anal.} 109 (1990), 195-212.  
\bibitem[D]{D} J. Douglas.  Minimal Surfaces of Higher Topological Structure.  
{\em Ann. of Math.}, 40(1) (1939), 205-298.  
\bibitem[HS]{HS} R. Hardt, L. Simon.  Boundary Regularity and Embedded Solutions 
for the Oriented Plateau Problem.  {\em Ann. of Math.}, 110 (1979), 439-486.  
\bibitem[M]{M} W. H. Meeks.  Lectures on Plateau's Problem.  
{\em Inst. Mat. Pura Apl.}, Rio de Janeiro (1978).
\bibitem[MW1]{MW1} W. H. Meeks, B. White.  Minimal 
Surfaces Bounded by Convex Curves in Parallel Planes.  {\em Comm. 
Math. Helv.}, 66 (1991), 263-278.  
\bibitem[MW2]{MW2} W. H. Meeks, B. White.  The Space of 
Minimal Annuli Bounded by an Extremal Pair of Planar Curves.  {\em 
Comm. Anal. Geom.}, 1(3-4) (1993), 415-437.  
\bibitem[MY]{MY} W. H. Meeks, S. T. Yau.  The Existence of Embedded Minimal 
Surfaces and the Problem of Uniqueness.  {\em Math. Z.}, 
179 (1982), 151-168.
\bibitem[N1]{N1} J. C. C. Nitcshe.  A Supplement to the Condition of J. 
Douglas.  {\em Rend. Circ. Matem. Palermo}, Serie II, Tomo XIII (1964).  
\bibitem[N2]{N2}  J. C. C. Nitcshe.  Vorlesungen uber Minimalflachen.  
Springer-Verlag (1975).  
\bibitem[O1]{O1} R. Osserman.  A Survey of Minimal Surfaces.  Dover Publications 
(1986).  
\bibitem[O2]{O2}  R. Osserman.  A Proof of the Regularity Everywhere of the 
Classical Solution of Plateau's Problem.  {\em Ann. of Math.} 91 (1970), 
550-569.  
\bibitem[Sc1]{Sc1} R. Schoen. Uniqueness, Symmetry, and Embeddedness of 
Minimal Surfaces.  {\em J. of Diff. Geom.}, 18 (1983), 791-809. 
\bibitem[Sc2]{Sc2} R. Schoen.  Estimates for Stable Minimal Surfaces in 
Three Dimensional Manifolds.  
{\em Annals of Math. Stud.} 103, Princeton University 
Press (1983).
\bibitem[Sh]{Sh} M. Shiffman.  On Surfaces of Stationary Area Bounded by 
Two Circles, or Convex Curves, in Parallel Planes.  {\em Ann. of 
Math.}, 63 (1956), 77-90.  

\end{thebibliography}

\vspace{0.5in}

\begin{flushright}
{\em Wayne Rossman}\\
{\em Graduate School of Mathematics}\\
{\em Kyushu University, Fukuoka 812}\\
{\em Japan}\\
\end{flushright}

\end{document}